\DeclareFontFamily{U}{ntxmia}{\skewchar \font =127}
 \DeclareFontShape{U}{ntxmia}{m}{it}{
                        <-> \ntxmath@scaled ntxmia
                      }{}    
                      \DeclareFontShape{U}{ntxmia}{b}{it}{
                        <-> \ntxmath@scaled ntxbmia
                      }{}
\def\NAT@spacechar{~}
\crefname{figure}{figure}{figures}
\crefname{claim}{Claim}{Claims}
\crefname{figure}{Figure}{Figures}
\crefname{claim}{claim}{claims}
\Crefname{figure}{Figure}{Figures}
\Crefname{claim}{Claim}{Claims}
\theoremstyle{definition}
\newtheorem{definition}{Definition}
\newtheorem{remark}[definition]{Remark}
\theoremstyle{plain}
\newtheorem{claim}{Claim}
\newtheorem{theorem}[definition]{Theorem}
\newtheorem{corollary}[definition]{Corollary}
\newtheorem{lemma}[definition]{Lemma}
\newtheorem{conjecture}[definition]{Conjecture}
\newtheorem{problem}[definition]{Problem}
\newenvironment{claimproof}{%
\let\origqed=\qedsymbol%
\renewcommand{\qedsymbol}{$\blacktriangleleft$}%
\begin{proof}}{\end{proof}\let\qedsymbol=\origqed}
\numberwithin{equation}{section}
\renewcommand{\binom}[2]{\ensuremath{\mleft(\kern-.1em\genfrac{}{}{0pt}{}{#1}{#2}\kern-.1em\mright)}}    
\newcommand{\inbinom}[2]{\ensuremath{\bigl(\kern-.1em\genfrac{}{}{0pt}{}{#1}{#2}\kern-.1em\bigr)}} 
\newcommand*\nume{\ensuremath{\mathrm{e}}}
\newcommand{\cA}{\mathcal{A}}
\newcommand{\cE}{\mathcal{E}}
\newcommand{\cF}{\mathcal{F}}
\newcommand{\cG}{\mathcal{G}}
\newcommand{\cS}{\mathcal{S}}
\newcommand{\cW}{\mathcal{W}}
\newcommand{\cX}{\mathcal{X}}
\newcommand{\PP}{\mathbb{P}}
\renewcommand{\Pr}{\PP}
\def\moverlay{\mathpalette\mov@rlay}
\def\mov@rlay#1#2{\leavevmode\vtop{%
  \baselineskip\z@skip \lineskiplimit-\maxdimen
  \ialign{\hfil$\m@th#1##$\hfil\cr#2\crcr}}}
\newcommand{\charfusion}[3][\mathord]{
    #1{\ifx#1\mathop\vphantom{#2}\fi
        \mathpalette\mov@rlay{#2\cr#3}
      }
    \ifx#1\mathop\expandafter\displaylimits\fi}
\newcommand{\dist}{\operatorname{dist}}
\newcommand{\eps}{\epsilon}
\newcommand{\COMMENT}[1]{}
\newcommand{\COMNEW}[1]{}
\renewcommand{\COMNEW}[1]{\footnote{\textcolor{red!70!black}{#1}}} 
\title[Sharp threshold for embedding balanced spanning trees in random geometric graphs]{Sharp threshold for embedding balanced spanning trees\linebreak{} in random geometric graphs}
\author[A.~Espuny D\'iaz]{Alberto Espuny D\'iaz}
\email{alberto.espuny-diaz@tu-ilmenau.de}
\address[Espuny D\'iaz]{Institut f\"ur Mathematik, Technische Universit\"at Ilmenau, 98684 Ilmenau, Germany.}
\author[L.~Lichev]{Lyuben Lichev}
\email{lyuben.lichev@univ-st-etienne.fr}
\address[Lichev]{Institut Camille Jordan, Univ. Jean Monnet, Saint-Etienne, France}
\author[D.~Mitsche]{Dieter Mitsche}
\email{dieter.mitsche@mat.uc.cl}
\address[Mitsche]{IMC, Pont.\ Univ.\ Cat\'{o}lica, Chile and Institut Camille Jordan, Univ.\ Jean Monnet, Saint-Etienne, France}
\author[A.~Wesolek]{Alexandra Wesolek}
\email{agwesole@sfu.ca}
\address[Wesolek]{Department of Mathematics, Simon Fraser University, Burnaby, BC, Canada}
\thanks{The research leading to these results has been supported by the Carl-Zeiss-Foundation and by DFG grant PE 2299/3-1 (A.~Espuny D\'iaz), by grant GrHyDy ANR-20-CE40-0002 and by Fondecyt grant 1220174 (D.~Mitsche) and by the Vanier Scholarship Program (A.~Wesolek). }
\date{\today}
\begin{document}

\begin{abstract}
A rooted tree is \emph{balanced} if the degree of a vertex depends only on its distance to the root.
In this paper we determine the sharp threshold for the appearance of a large family of balanced spanning trees in the random geometric graph $\cG(n,r,d)$.
In particular, we find the sharp threshold for balanced binary trees.
More generally, we show that \emph{all} sequences of balanced trees with uniformly bounded degrees and height tending to infinity appear above a sharp threshold, and none of these appears below the same value. 
Our results hold more generally for geometric graphs satisfying a mild condition on the distribution of their vertex set, and we provide a polynomial time algorithm to find such trees.
\end{abstract}


\maketitle

\section{Introduction}

The \emph{random geometric graph} $\cG(n,r,d)$ is a classic model of random graphs defined as follows. Let $d$ and $n$ be positive integers, and let $r$ be a positive real number.
The vertices of the graph are $n$ points sampled uniformly at random and independently from $[0,1]^d$, and two vertices are connected by an edge if their Euclidean distance is at most $r$.
Since their introduction by Gilbert~\cite{Gil61} as a model for telecommunication networks, random geometric graphs have received a lot of attention both from an applied point of view~\cite{Aky02, Nek07, FLMPSSSvL19} and from a theoretical point of view~\cite{Pen03, Pen16}.
Moreover, the original model has been generalized in many different ways; for example, \citet{Wax88} introduced a model with additional percolation of the edges, which has been further studied in~\cite{DG16, LLMS22, Pen16a}.
Here, we focus on the classic model defined above. 

One main reason for the substantial interest in random geometric graphs is their use as a model for wireless networks.
Such networks consist of a set of nodes, each of them equipped with a wireless transceiver to communicate with their nearest neighbors (in terms of Euclidean distance).
The ability of communication (controlled by the range $r$) depends on the transmitting power of the transceivers.
The goal is to spread information through the network, which is done in a multi-hop fashion.
In many ad hoc networks, like sensor networks, energy consumption is an issue.
Therefore, one of the most important questions when modeling a network is how to minimize power consumption (see, for example, \cite{Aky02, AB02, ZG03}).
To do this, the transmission range should be made as small as possible, but at the same time large enough to make sure that a piece of information transmitted from a node will arrive to all other nodes in the network. Spanning trees are especially interesting from this application point of view since they are minimally connected sets.
In particular, when restricting these trees to having a particular structure, this raises the question of which transmission range is needed in order to ensure such trees in a graph.

Random geometric graphs are known to exhibit \emph{threshold} behavior for many graph properties, meaning that there are some special values of the parameters of the model around which a drastic change in the behavior of the graph with respect to these properties takes place.
Understanding these thresholds is one of the main directions of research in the theory of random graphs.
In the current work, we show that such a phenomenon takes place for the property of containing a wide range of ``sufficiently symmetric'' spanning trees.

\subsection{Thresholds in random geometric graphs}

Formally, a \emph{graph property} (or just property) is a set of labeled graphs which is closed under isomorphism.
A property is said to be \emph{monotone increasing} (resp. monotone decreasing) if it is preserved under edge addition (resp. edge deletion).
In particular, when thinking of $\cG(n,r,d)$, an increasing (resp.\ decreasing) property is preserved after increasing (resp.\ decreasing) the radius.
This leads to the definition of thresholds in random geometric graphs: a function $r^*=r^*(n,d)$ is a \emph{threshold} for some monotone increasing property $\mathcal{P}$ in $\cG(n,r,d)$ if
\[\lim_{n\to\infty}\mathbb{P}[\cG(n,r,d)\in\mathcal{P}]=
\begin{cases}
0&\text{if }r=o(r^*),\\
1&\text{if }r=\omega(r^*).
\end{cases}\]
Moreover, we say that $r^*$ is a \emph{sharp threshold} for $\mathcal{P}$ if, for every $\epsilon\in (0,1)$,
\[\lim_{n\to\infty}\mathbb{P}[\cG(n,r,d)\in\mathcal{P}]=
\begin{cases}
0&\text{if }r \le (1-\epsilon)r^*,\\
1&\text{if }r \ge (1+\epsilon)r^*.
\end{cases}\]
In this context, \citet{McC04} proved that all monotone increasing properties have a threshold in $\cG(n,r,1)$, and also that any such property whose threshold is much larger than $\log n/n$ must have a sharp threshold (throughout the paper, $\log$ stands for the natural logarithm).
\citet{GRK05} gave general upper bounds for the threshold width in $\cG(n,r,d)$ for all $d\geq 1$, and \citet{BP14} characterized vertex-monotone properties which exhibit a sharp threshold.
While the results in~\cite{McC04,GRK05,BP14} serve to prove the existence of (sharp) thresholds, they give no indication of where these thresholds actually are.
Determining the (sharp) thresholds for different properties of interest is one of the main problems in the area, and it has received much attention.

\subsection{Spanning trees in random geometric graphs}

A \emph{spanning} subgraph of a graph $G$ is a subgraph whose vertex set is $V(G)$.
In this paper, we are interested in determining the (sharp) threshold for the appearance of different spanning trees in random geometric graphs. 

A necessary condition for the containment of any spanning tree is that the graph must be connected.
The sharp threshold for connectivity in $\mathcal{G}(n,r,1)$ was determined by \citet{GJ96}, who showed that it is $\log n/n$.
In $\mathcal{G}(n,r,2)$, \citet{GuptaKumar} and \citet{Penrose97} independently showed that the sharp threshold for connectivity is $\sqrt{\log n/\uppi n}$.
Later, \citet[Theorem~13.2]{Pen03} showed that, for all $d\geq2$, the sharp threshold for connectivity in $\cG(n,r,d)$ is $(2^{d-1}\log n/(d\theta_dn))^{1/d}$, where $\theta_d$ is the volume of a unit ball in $\mathbb{R}^d$.
In fact, he even showed that, as $r$ increases, typically 
the graph becomes connected exactly when its last isolated vertex disappears (see~\cite[Theorem~13.17]{Pen03}). 
These results are crucial towards understanding the properties of the \emph{minimum spanning tree} of $\cG(n,r,d)$. 
However, they do not give us any information about the threshold for the appearance of \emph{specific} spanning trees.

An important special case of the problem of finding a fixed spanning tree is the containment of a spanning path.
On the one hand, D\'iaz, Mitsche and P\'erez-Gim\'enez~\cite{DMP07} showed that the sharp threshold for $\mathcal{G}(n,r,2)$ to contain a Hamilton cycle (i.e., a spanning cycle) is $\sqrt{\log n/\uppi n}$, that is, the same as the sharp threshold for connectivity. 
This result was later extended by \citet{BBKMW11} and \citet{MPW11}, who showed that the sharp threshold for Hamiltonicity in $\cG(n,r,d)$ coincides with the sharp threshold for connectivity for all $d\geq2$.
In fact, they showed that typically a Hamilton cycle appears in $\cG(n,r,d)$ once all vertices have degree at least~$2$. 
On the other hand, when $d=1$, scanning the points from left to right easily shows that the graph contains a spanning path as long as it is connected.
In particular, these results imply that the sharp threshold for containing a spanning path in $\cG(n,r,d)$ is the same as the sharp threshold for connectivity for all $d\geq1$.

The question of trying to find the (sharp) threshold for the appearance of different families of spanning trees pops up naturally.
In the case of ``path-like'' trees, one may obtain the threshold directly from the threshold for Hamiltonicity. 
Indeed, by the triangle inequality, if $\cG(n,r,d)$ contains a Hamilton cycle and $k\in\mathbb{N}$, then $\cG(n,kr,d)$ contains the $k$-th power of this Hamilton cycle.
It immediately follows that every spanning tree which can be embedded into the $k$-th power of a Hamilton cycle has the same threshold as Hamiltonicity.
This is the case, for instance, of spanning caterpillars with constant maximum degree.

One may naturally wonder whether all spanning trees with bounded maximum degree have the same threshold.
Incidentally, in the model of binomial random graphs $\mathcal{G}(n,p)$ where each of the $\inbinom{n}{2}$ possible edges appears independently with probability $p$, \citet{Montgomery19} proved that this is the case: the threshold for all bounded-degree spanning trees is $\log n/n$.
However, this turns out to be very far from the truth in random geometric graphs.
Indeed, there are bounded-degree trees $T$ whose diameter is logarithmic in the number of vertices, and this diameter directly imposes a much higher lower bound on the threshold $r^*$ for the property of containing a copy of $T$:
since a spanning subgraph of $\cG(n,r,d)$ cannot have smaller diameter than $\cG(n,r,d)$ itself, the threshold for spanning trees of diameter $O(\log n)$ must satisfy $r^*=\Omega(1/\log n)$.
This is far larger than the connectivity threshold mentioned above.
The results of \citet{GRK05} imply that, for any such tree, there is a sharp threshold.
Our goal is to determine the value of this threshold.

Note that, among all trees with logarithmic diameter, binary trees are especially interesting due to their many applications as data structures (see, e.g.,~\cite{IntroAlg}).
Identifying the sharp threshold for embedding these trees in $\cG(n,r,d)$ is thus an important particular case of our study.

\subsection{Main results}

As mentioned above, our focus is on determining the sharp threshold for the appearance of certain trees having thresholds at large radial values.
For a graph $G$, we denote by $|G|$ the size of the vertex set of $G$.

The trees $T$ we consider will have a special vertex which we will call the \emph{root}.
We may think of the vertices of $T$ as being partitioned into \emph{layers} $V_0,V_1,\ldots$, where $V_i$ contains all vertices at (graph) distance $i$ from the root.
The \emph{height} of $T$ is the maximum $h\in\mathbb{N}$ such that $V_h$ is empty.
For any vertex $v\in V(T)$, if $v\in V_j$, we refer to its neighbors in $V_{j+1}$ as its \emph{children}, and to all vertices which can be reached by a path from $v$ without going through $V_{j-1}$ as its \emph{descendants}.
Given a positive integer $s$, we say that a tree is an \emph{$s$-ary tree} if all its vertices have at most $s$ children.
We say that an $s$-ary tree of height $h$ is \emph{complete balanced} if all vertices in $V_0,V_1,\ldots,V_{h-1}$ have $s$ children.
For simplicity, we will refer to complete balanced $s$-ary trees simply as \emph{balanced $s$-ary trees} (except in \cref{sect:prebalanced}).
Observe that a balanced $s$-ary tree of height $h$ must have $n=\sum_{i=0}^h s^i$ vertices and diameter $2h$.

As typical in random graphs literature, we focus on asymptotic statements.
We state our asymptotic results in terms of the height $h$ of the trees, which then also yields asymptotic results with respect to the number of vertices. 
Our first result determines the sharp threshold for $\cG(n,r,d)$ to contain a spanning copy of the balanced $s$-ary tree for any fixed integer $s\ge 2$ and $n$ of the form $\sum_{i=0}^h s^i$.

\begin{theorem}\label{thm:treev1}
Fix positive integers\/ $s\geq2$ and\/ $d$.
Let\/ $h$ be a positive integer, and set\/ $n \coloneqq \sum_{i=0}^h s^i$.
Let\/ $T_h$ be the balanced\/ $s$-ary tree of height\/ $h$ (and on $n$ vertices).
Then,\/ $r^*\coloneqq\sqrt{d}/2h$ is the sharp threshold for\/ $\cG(n,r,d)$ to contain a copy of\/ $T_h$.
\end{theorem}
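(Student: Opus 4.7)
For the \textbf{lower bound}, the plan is a diameter argument. Since $T_h$ has diameter exactly $2h$, any spanning copy in $\cG(n,r,d)$ forces $\mathrm{diam}(\cG(n,r,d))\leq 2h$. When $r\leq(1-\eps)\sqrt{d}/(2h)$, a union bound over two small balls placed near antipodal corners of $[0,1]^d$ shows that a.a.s.\ each contains at least one random point. The Euclidean distance between two such points is at least $(1-\eps/2)\sqrt{d}$, and since any edge of $\cG(n,r,d)$ has Euclidean length at most $r$, their graph distance is at least $(1-\eps/2)\sqrt{d}/r>2h$, contradicting the diameter bound.

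For the \textbf{upper bound}, set $r=(1+\eps)\sqrt{d}/(2h)$, so $hr=(1+\eps)\sqrt{d}/2$ exceeds the circumradius of $[0,1]^d$ by the factor $1+\eps$; this slack is what powers the embedding. My plan is to embed $T_h$ level by level via a hierarchical partition of $[0,1]^d$. First, as preprocessing, I would tessellate $[0,1]^d$ into auxiliary cubes of side $\delta r$ for a small constant $\delta=\delta(\eps,s)$; by a Chernoff bound and a union bound over the polynomially many cubes, a.a.s.\ every such cube contains $(1\pm o(1))n(\delta r)^d$ random points, giving uniform density control at scale $\delta r$. Next, I would build a nested family of partitions matching the tree structure: at level $i$, the cube is split into $s^i$ cells such that each cell corresponds to a depth-$i$ vertex of $T_h$ and contains exactly as many random points as the subtree rooted at that vertex (approximately $ns^{-i}$). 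These splits would be realised by quantile cuts along coordinate axes in round-robin fashion, ensuring both the exact point counts and cells of bounded aspect ratio. Finally, I would place tree vertices into cells so that (a) the image of each depth-$i$ vertex lies within distance $r$ of its parent's image, and (b) the subtree of height $h-i$ rooted at that vertex can reach every point of its cell via hops of length $r$. Condition (b) reduces to the invariant that each level-$i$ cell is contained in a ball of radius $(h-i)r$ around the vertex's image, which the slack $\eps$ makes possible by ensuring $2(h-i)r\geq\sqrt{d}\cdot s^{-i/d}$ at every level $i$.

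The principal obstacle is the simultaneous enforcement of the arithmetic constraints (exact point counts per cell) and the geometric constraints (parent--child pairs within distance $r$), especially for $s>2$ where the shared boundary of $s$ sub-cells is not a single hyperplane and the parent vertex cannot lie on all boundary hyperplanes simultaneously. I expect this to require either a round-robin axis split with parent vertices positioned near the centre of the boundary structure, or a more subtle split into $s$ approximately congruent pieces sharing a common boundary point. The bottom layer of leaves should then be attached via Hall's theorem, leveraging the abundant supply of neighbours at scale $\delta r$ given by the tessellation to absorb any residual imbalances. The slack $\eps$ must absorb the cumulative geometric drift of order $O(\delta r)$ across all $h$ levels; this is achievable by choosing $\delta=\delta(\eps,s)$ small enough to make $\delta h=o(1)$, which is compatible with $h\to\infty$ since $\delta$ is constant.
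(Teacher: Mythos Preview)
Your lower bound is correct and matches the paper's argument.

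The upper bound plan contains a genuine gap: conditions (a) and (b) cannot be satisfied simultaneously under the hierarchical partition you describe, already at level $i=2$. Take $d=1$, $s=2$. By your density control, the four level-$2$ cells produced by quantile cuts are $[0,\tfrac14], [\tfrac14,\tfrac12], [\tfrac12,\tfrac34], [\tfrac34,1]$ up to $o(1)$. Wherever the root $p_0$ is placed, every level-$2$ vertex lies in $[p_0-2r,\,p_0+2r]$. For one level-$2$ vertex to land in $[0,\tfrac14]$ and another in $[\tfrac34,1]$ you need $3/4-2r\le p_0\le 1/4+2r$, hence $r\ge 1/8$; but $r=(1+\eps)/(2h)\to 0$. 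So the leftmost and rightmost cells are simply unreachable from their parents. Your displayed inequality $2(h-i)r\ge \sqrt{d}\,s^{-i/d}$ only controls the \emph{diameter} of a level-$i$ cell; it says nothing about whether the cell intersects the ball $B_{p_{\text{parent}}}(r)$, which is the real obstruction. The acknowledged ``principal obstacle'' is therefore fatal rather than a technicality, and placing parents near cell boundaries cannot fix it: after a constant number of levels the outer cells are at distance $\Theta(1)$ from the root, while the embedded vertices are still within $O(r)=O(1/h)$ of it. (The closing sentence is also self-contradictory: a constant $\delta$ cannot make $\delta h=o(1)$ as $h\to\infty$.)

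The paper resolves this by abandoning the one-level-per-refinement scheme. All of the first $m'=O(\log h)$ layers are dumped into a single tiny cube near the centre; then, over the next roughly $(1-\eps/4)h$ layers, the active vertices are pushed outward along prescribed sequences of small cubes, advancing by about $(1+\tfrac{5\eps}{8})r^*$ per layer. The crucial point is that passing from scale $s^{1-\ell}$ to scale $s^{-\ell}$ costs roughly $\sqrt{d}(s^{1-\ell}-s^{-\ell})/(2r)$ layers rather than one, and summing this geometric series over $\ell=1,\ldots,k-1$ gives at most $(1-\eps/4)h$ layers in total. Only once the active vertices are evenly spread over a fine tessellation (at level $m'+m$) are the remaining $\Theta(\eps h)$ layers attached via \cref{cor:hall}. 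In short, the paper spreads slowly over many tree levels to cover one geometric scale, whereas your plan tries to cover one geometric scale per tree level, which is geometrically impossible at the sharp threshold.
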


In fact, \Cref{thm:treev1} is a particular case of a similar result for a larger class of trees.
Given positive integers $h$ and $(s_i)_{i=1}^h$, we say that a tree $T$ is the \emph{balanced tree over the sequence $(s_i)_{i=1}^h$} if it has height $h$ and  for each $i\in\{1,\ldots,h\}$, every vertex of $T$ in $V_{i-1}$ has exactly $s_i$ children.
In particular, such a tree $T$ contains exactly $\sum_{i=0}^h \prod_{j=1}^i s_j$ vertices (where, by convention, the empty product equals~$1$). 
Moreover, if $M\geq2$ is an integer and $s_i\in \{2,\ldots,M\}$ for every $i\in \{1,\ldots,h\}$, we say that the balanced tree over the sequence $(s_i)_{i=1}^h$ is a balanced $M$-tree.

Our next result extends \cref{thm:treev1} to all balanced $M$-trees, as long as $M$ is not too large compared to $h$.

\begin{theorem}\label{thm:treev2}
Fix a positive integer\/ $d$, and let\/ $2\leq M = M(h) = o(h/\log h)$. 
Let\/ $(T_h)_{h\geq1}$ be a sequence of trees where\/ $T_h$ is a balanced\/ $M$-tree of height\/ $h$.
Then, \/$r^* \coloneqq \sqrt{d}/2h$ is the sharp threshold for\/ $\cG(|T_h|,r,d)$ to contain a copy of\/ $T_h$.
\end{theorem}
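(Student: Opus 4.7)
The plan is to establish the $0$- and $1$-statements separately.

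For the lower bound (the $0$-statement), suppose $r \leq (1-\epsilon)r^*$. Since $T_h$ has diameter exactly $2h$, any graph containing $T_h$ as a spanning subgraph has diameter at most $2h$, so it suffices to show that the graph diameter of $\cG(|T_h|,r,d)$ exceeds $2h$ w.h.p. I pick two small axis-aligned boxes of constant volume near antipodal corners of $[0,1]^d$, arranged so that any pair of points drawn from opposite boxes has Euclidean distance at least $(1-\epsilon/2)\sqrt{d}$. A standard Chernoff bound shows each box contains at least one vertex of $\cG$ w.h.p., and the graph distance between such vertices is then at least $(1-\epsilon/2)\sqrt{d}/r > 2h$ for $\epsilon$ small enough.

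For the upper bound (the $1$-statement), suppose $r \geq (1+\epsilon)r^*$ and fix a slack $\delta = \delta(\epsilon) > 0$. The plan is to construct an injection $\phi : V(T_h) \to V(\cG)$ with $\|\phi(u) - \phi(v)\|_2 \leq r$ for every tree edge $uv$. The strategy has two ingredients. First, I design a \emph{target embedding} $\tau : V(T_h) \to [0,1]^d$ with tree edges mapped to Euclidean segments of length at most $(1-\delta)r$; this is achieved by recursively partitioning $[0,1]^d$ into a hierarchical system of rectangular cells whose refinement structure mirrors the branching pattern $(s_i)_{i=1}^h$ of $T_h$, and placing each tree vertex at a canonical point of its cell. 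The hop budget along any root-to-leaf path in $T_h$ is $hr = (1+\epsilon)\sqrt{d}/2 > \sqrt{d}/2$, exactly enough to reach any point of $[0,1]^d$ from the root, so the construction is tight. Second, I perturb $\tau$ to an injection $\phi$ into the random point set via a matching argument: choose an intermediate level $i^*$ at which the hierarchical cells have volume $\omega(\log n/n)$ (so each contains $\omega(\log n)$ random points w.h.p.\ by a Chernoff bound), embed the top $i^*$ levels of $T_h$ greedily by assigning each target to a nearby unused random point, and then handle the subtree rooted at each level-$i^*$ vertex via a Hall-type matching to the random points available in its cell.

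The main obstacle is designing the target embedding and choosing $i^*$ so that all the constraints mesh. For the target embedding, the per-edge constraint $\|\tau(u)-\tau(v)\|_2 \leq (1-\delta)r$ is tight at several scales, so the partitioning scheme must carefully spread the total hop budget across the $h$ levels of the hierarchy, ensuring cells remain sufficiently isoperimetric that the centre-to-subcentre displacement at each level stays within budget. For the matching step, one must verify Hall's condition uniformly over the random point process, which reduces to standard large-deviation bounds for point counts in axis-aligned regions of $[0,1]^d$. The assumption $M = o(h/\log h)$ is what allows $i^*$ to be chosen so that the cells at level $i^*$ stay above the concentration threshold of side length $((\log n)/n)^{1/d}$ while the subtree rooted at each level-$i^*$ vertex still fits within its cell; when $M$ grows faster than this bound, the hierarchical cells become too small before concentration can take hold, and the embedding strategy breaks down.
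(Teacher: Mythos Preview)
Your $0$-statement is fine and matches the paper's argument.

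The $1$-statement sketch has a genuine gap. A hierarchical partition of $[0,1]^d$ whose refinement mirrors $(s_i)_{i=1}^h$, with each tree vertex placed at a canonical point of its cell, cannot satisfy $\lVert\tau(u)-\tau(v)\rVert\le(1-\delta)r$ on every tree edge. At level~$1$ the parent cell is $[0,1]^d$ itself, so the centre-to-subcentre displacement is $\Theta(1)$, while $r=\Theta(1/h)$. More structurally: if every edge of $\tau$ has length at most $r$, then every level-$i^*$ target lies within distance $i^*r$ of the root's target; choosing $i^*$ so that the level-$i^*$ cells have diameter $O(r)$ forces $i^*=\Theta(\log h)$, hence $i^*r=o(1)$, so all level-$i^*$ targets are clustered near one point --- yet you also want those targets to sit in their respective cells, which tile all of $[0,1]^d$. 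You flag this as ``the main obstacle'' and say the budget must be ``carefully spread'', but no mechanism is given; having total budget $hr\approx\sqrt d/2$ does not help when a single early edge is asked to cover $\Theta(1)$. Your account of where $M=o(h/\log h)$ enters is also off: in your framework the two constraints on $i^*$ (cell volume $\omega(\log n/n)$ for concentration, cell diameter $\le r$ for the clique) are compatible for any $M$.

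The paper resolves this not with a static target map but with a dynamic spreading procedure. The first $m'=dk_2M=o(h)$ layers are all dumped into a single cube of side $\Theta(1/h)$, so every early edge is trivially short. Then, over at most $(1-\epsilon/4)h$ further steps, the active vertices are marched outward along near-straight trajectories, each hop of length at most $r$, until they are equidistributed over a fixed grid of $s^{k_sd}$ cubes of side $\Theta(1/h)$; only after this equidistribution does a Hall-type argument (between grid cubes and the remaining random points, via enlarged cubes) finish the embedding. The hypothesis $M=o(h/\log h)$ is used, via pigeonhole, to guarantee that some value $s\in\{2,\dots,M\}$ occurs at least $dk_2=\Theta(\log h)$ times among $s_1,\dots,s_{m'}$ while keeping $m'=o(h)$; this $s$ fixes the grid and supplies the divisibility $s^{k_sd}\mid\prod_{j\le m'}s_j$ needed to split each subsequent layer evenly among the cubes.
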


\Cref{thm:treev2} follows from a more general technical result (\cref{thm:tree}) which we state and prove in \cref{sec:proof}.

\subsection{Outline of proof}

In order to prove the upper bound for the threshold, we provide an efficient algorithm that finds an embedding of a balanced $M$-tree $T$ of height $h$ into $\cG(|T|,r,d)$ when $r \geq (1+\eps)r^*$ for any small (but fixed) $\eps > 0$.
This embedding is carried out in two steps. 
The first part of the proof consists of the analysis of an algorithm that embeds the first roughly $(1-\eps)h$ layers of $T$ into $\cG(|T|,r,d)$ (itself seen in its random embedding in $[0,1]^d$) in a fractal-like fashion.
The second part of the embedding relies on a version of Hall's theorem (for which efficient algorithmic implementations are well known).
The lower bound is comparably much easier and follows from the already mentioned comparison of the diameters of any connected graph and any of its spanning subgraphs.

\section{Auxiliary results}\label{sec:prelims}

We recall a variant of the famous Chernoff's bound (see, e.g.,~\cite[Corollary~2.3]{JLR}).

\begin{lemma}\label{lem:Chernoff}
Let\/ $X$ be a binomial random variable and\/ $\mu \coloneqq \mathbb{E}[X]$.
Then, for all\/ $0<\delta<1$, we have that 
\[\mathbb{P}[|X-\mu|\geq\delta\mu]\leq2\nume^{-\delta^2\mu/3}.\]
\end{lemma}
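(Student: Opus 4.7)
The plan is to apply the standard Bernstein--Chernoff exponential moment method. Write $X = \sum_{i=1}^n X_i$ as a sum of independent Bernoulli variables with $\Exp[X_i] = p_i$, so that $\mu = \sum_i p_i$. For any $t > 0$, Markov's inequality applied to $e^{tX}$, together with independence and the elementary bound $1 + p_i(e^t - 1) \leq e^{p_i(e^t - 1)}$, gives
\[\PP[X \geq (1+\delta)\mu] \;\leq\; e^{-t(1+\delta)\mu}\,\Exp[e^{tX}] \;\leq\; e^{-t(1+\delta)\mu + \mu(e^t - 1)}.\]

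Next I would optimise in $t$: the choice $t = \log(1+\delta) > 0$ yields the classical upper-tail bound
\[\PP[X \geq (1+\delta)\mu] \;\leq\; \left(\frac{e^\delta}{(1+\delta)^{1+\delta}}\right)^{\!\mu}.\]
An entirely analogous computation with $t < 0$ (or by applying the same argument to $n - X$) produces the lower-tail bound with $(1-\delta)$ in place of $(1+\delta)$.

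Finally, to reach the stated clean form $2\,e^{-\delta^2\mu/3}$, I would verify the elementary calculus inequalities
\[(1+\delta)\log(1+\delta) - \delta \;\geq\; \delta^2/3 \quad\text{and}\quad (1-\delta)\log(1-\delta) + \delta \;\geq\; \delta^2/3\]
for $\delta \in (0,1)$, both of which follow by differentiating twice and comparing with the Taylor expansion at $\delta = 0$; in fact the lower tail admits the sharper exponent $\delta^2/2$. A union bound on the two tails then produces the factor of $2$. The only mildly technical step is the verification of the two calculus inequalities above, but this is entirely routine and is the only place where the restriction $\delta < 1$ is used.
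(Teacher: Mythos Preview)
The paper does not prove this lemma at all; it simply cites it as \cite[Corollary~2.3]{JLR} and moves on. Your proposal supplies the standard exponential-moment proof, and it is correct: the moment-generating-function bound, the optimisation $t=\log(1\pm\delta)$, and the two calculus inequalities are exactly the textbook route to this form of Chernoff's bound (as in, e.g., Janson--{\L}uczak--Ruci\'nski). So there is nothing to compare here beyond noting that you have written out what the paper takes for granted.
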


Next, we state a version of the celebrated Hall's theorem~\cite{Hall35} (see also \cite[Chapter~2.1]{Die17}) and deduce a corollary needed in the proof of Theorem~\ref{thm:tree}.

\begin{theorem}[\citet{Hall35}]\label{thm:hall}
A bipartite graph with parts\/ $A$ and\/ $B$ contains a matching of size\/ $|A|$ if and only if for every set\/ $S\subseteq A$, the number of neighbors of\/ $S$ in\/ $B$ is at least\/ $|S|$.
\end{theorem}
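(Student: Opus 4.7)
The plan is to prove the sufficiency direction of the statement by induction on $|A|$; the necessity direction is immediate, since any matching saturating $A$ pairs the vertices of any $S\subseteq A$ with $|S|$ distinct partners in $N(S)$. For the base case $|A|=1$, Hall's condition applied to $A$ itself gives the unique vertex of $A$ a neighbor, which we match.

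For the inductive step I would split into two cases depending on whether Hall's condition is tight on some proper subset. In Case~1, every nonempty proper $S\subsetneq A$ satisfies $|N(S)|\geq|S|+1$. Then I pick any $a\in A$ and any neighbor $b\in N(\{a\})$, match them, and delete both from the graph. For any $T\subseteq A\setminus\{a\}$, the neighborhood loses at most the vertex $b$, so $|N_{B\setminus\{b\}}(T)|\geq|N(T)|-1\geq|T|$, preserving Hall's condition. The induction hypothesis produces a matching of $A\setminus\{a\}$ into $B\setminus\{b\}$, and adding the edge $ab$ closes the argument.

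In Case~2, there exists a nonempty proper $S\subsetneq A$ with $|N(S)|=|S|$. The subgraph induced on $S\cup N(S)$ inherits Hall's condition, so by induction it contains a matching $M_1$ saturating $S$. On the bipartite subgraph induced on $(A\setminus S)\cup(B\setminus N(S))$, for any $T\subseteq A\setminus S$ the decomposition $N(S\cup T)=N(S)\cupdot N_{B\setminus N(S)}(T)$ together with Hall's condition applied to $S\cup T$ yields $|S|+|T|\leq|N(S\cup T)|=|S|+|N_{B\setminus N(S)}(T)|$, so $|N_{B\setminus N(S)}(T)|\geq|T|$. Induction then provides a matching $M_2$ saturating $A\setminus S$, and $M_1\cup M_2$ is the required matching of size $|A|$.

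The only delicate point is verifying that Hall's condition survives both reductions, but this amounts to a routine counting check once the case split is correctly set up. Since no deeper machinery beyond induction on $|A|$ is needed, I do not anticipate any serious obstacle; the main conceptual move is the choice of the dichotomy itself, which pays for itself in both cases.
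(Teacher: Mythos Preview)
Your argument is the standard inductive proof of Hall's theorem and it is correct as written: the necessity direction is trivial, the base case is clear, and in the inductive step your dichotomy between the ``slack'' case and the ``tight'' case is exactly what is needed. In Case~1 the deletion of a single matched pair $\{a,b\}$ cannot destroy Hall's condition because every proper nonempty subset had at least one vertex of slack; in Case~2 the counting identity $|N(S\cup T)| = |N(S)| + |N_{B\setminus N(S)}(T)|$ is the key observation, and you state it cleanly. There is no gap.

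For comparison with the paper: the paper does not prove this statement at all. It is quoted as a classical result attributed to Hall and then used as a black box to derive \cref{cor:hall}. So your proposal supplies strictly more than the paper does here. If anything, the paper's interest lies downstream, in the ``blow-up'' reduction of \cref{cor:hall} to \cref{thm:hall}; your write-up would slot in as a self-contained justification of the cited theorem.
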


We refer to a copy of the bipartite graph $K_{1,k}$ in which $v$ constitutes the part containing exactly one vertex as a \emph{$k$-star with center $v$}. If we only specify that the center $v$ lies in some set $A$, we refer to this graph as a \emph{$k$-star with center in $A$}.

\begin{corollary}\label{cor:hall}
Fix a positive integer\/ $k$ and a bipartite graph\/ $G$ with parts\/ $A$ and\/ $B$ such that\/ $k|A| = |B|$.
Then, the vertices of\/ $G$ may be partitioned into\/ $|A|$ vertex-disjoint\/ $k$-stars with centers in\/ $A$ if and only if for every set\/ $S\subseteq A$, the number of neighbors of\/ $S$ in\/ $B$ is at least\/ $k|S|$.
\end{corollary}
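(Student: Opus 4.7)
My plan is to deduce \cref{cor:hall} from \cref{thm:hall} by a standard vertex-duplication trick. Construct an auxiliary bipartite graph $G'$ with parts $A'$ and $B$, where $A'$ is obtained from $A$ by replacing each vertex $a\in A$ with $k$ twin copies $a^{(1)},\ldots,a^{(k)}$; declare $a^{(j)}b$ to be an edge of $G'$ exactly when $ab\in E(G)$. Note that $|A'|=k|A|=|B|$.

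The key observation is that partitions of $V(G)$ into $|A|$ vertex-disjoint $k$-stars with centers in $A$ are in bijection with perfect matchings of $G'$: given such a partition, match $a^{(1)},\ldots,a^{(k)}$ to the $k$ leaves of the star centered at $a$ (in any order); conversely, given a matching of size $|A'|=|B|$ in $G'$, for each $a\in A$ attach to $a$ the $k$ vertices of $B$ matched to $a^{(1)},\ldots,a^{(k)}$, obtaining a $k$-star by construction of $G'$. Since $|A'|=|B|$, a matching of size $|A'|$ guaranteed by \cref{thm:hall} is automatically perfect on both sides.

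It remains to check that Hall's condition on $G'$ is equivalent to the condition in the corollary. The necessity of the condition is immediate from either viewpoint: if the $k$-star partition exists, then for any $S\subseteq A$ the $k|S|$ leaves of the stars centered in $S$ lie in $N_G(S)\subseteq B$. For sufficiency, take any $S'\subseteq A'$ and let $S\coloneqq\{a\in A:a^{(j)}\in S'\text{ for some }j\}$, so that $|S'|\leq k|S|$. Because all copies of $a$ share the same neighborhood in $B$, we have $N_{G'}(S')=N_G(S)$, and by assumption $|N_G(S)|\geq k|S|\geq|S'|$, verifying Hall's condition in $G'$. Applying \cref{thm:hall} then yields the desired $k$-star partition.

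I do not anticipate any substantial obstacle: the argument is routine once the duplication construction is set up, and the only mild point requiring care is to ensure that the inequality $|S'|\leq k|S|$ goes in the right direction when translating between the two Hall-type conditions.
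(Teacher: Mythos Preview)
Your proposal is correct and follows essentially the same approach as the paper: both proofs replace each vertex of $A$ by $k$ twin copies, observe that perfect matchings in the resulting bipartite graph correspond to $k$-star partitions of $G$, and verify Hall's condition by projecting a subset of copies back to $A$. The paper's argument differs only in notation and in leaving the ``perfect matching $\leftrightarrow$ star partition'' correspondence implicit.
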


\begin{proof}
The proof of the second statement from the first one is trivial.
Thus, we focus on the converse implication.
Let $A_1, \ldots, A_k$ be $k$ copies of the set $A$. For each $u\in A$ and each $i\in\{1,\ldots,k\}$, let $u_i$ be the copy of $u$ in $A_i$.
We define an auxiliary bipartite graph $\Gamma$ with parts $\overline{A} \coloneqq \bigcupdot_{i=1}^k A_i$ and $B$ where, for every $i\in \{1,\ldots,k\}$, $u\in A$ and $v\in B$, we have $u_iv\in E(\Gamma)$ if and only if $uv\in E(G)$.

Now, for any set $S\subseteq \overline{A}$, let $\alpha(S)\subseteq A$ be the set of all vertices $u\in A$ such that, for some $j\in\{1,\ldots,k\}$, we have $u_j\in S$. Then, for every $S\subseteq \overline{A}$, the neighborhoods of $S$ in $\Gamma$ and $\alpha(S)$ in $G$ coincide.
This means that the neighborhood of $S$ in $\Gamma$ contains at least $k|\alpha(S)|\ge |S|$ vertices.
Hence, an application of Hall's theorem ensures the existence of a perfect matching in $\Gamma$. Finally, identifying the sets $A_1, \ldots, A_k$ shows that one may partition the original graph into $|A|$ $k$-stars with centers in $A$, as desired.
\end{proof}

\begin{remark}\label{rem:polynomial}
If the condition of Hall's theorem is satisfied, there exist efficient algorithms for finding a matching in a bipartite graph.
A classic polynomial time algorithm for iterative construction of such a matching is based on augmenting paths; see, e.g.,~\cite[Section~2.1]{Die17}. 
Hopcroft and Karp's algorithm~\cite{HK73} improved on the classic algorithm by processing several augmenting paths at a time.
Later, Chwa and Kim~\cite{CK87}, and Goldberg, Plotkin and Vaidya~\cite{GPV93} came up with even faster parallel algorithms.
As a consequence, under the assumptions of \cref{cor:hall}, finding a family of $|A|$ disjoint stars with centers in $A$ can also be done in polynomial time.
\end{remark}

\section{Balanced trees in random geometric graphs}\label{sec:proof}

In this section, we state and prove the more general theorem from which \cref{thm:treev1,thm:treev2} follow.
This result is stated in terms of the \emph{random geometric graph sequence} defined as follows.
Let $r$ be a positive real number and let $d$ be a positive integer.
Given a set of points $\mathcal{X}\subseteq[0,1]^d$, the geometric graph $G(\mathcal{X}, r, d)$ is the graph with vertex set $\mathcal{X}$ and edge set $\{xy:x,y\in\mathcal{X}, x\neq y,\lVert x-y\rVert\leq r\}$, where $\lVert\cdot\rVert$ denotes the Euclidean norm.
Let $(X_i)_{i\geq1}$ be a sequence of independent uniform random variables on $[0,1]^d$.
The \emph{random geometric graph sequence} of radius $r$ is a sequence of graphs $\widehat{\cG}(r,d)=(G_n)_{n\ge 1}$ where $G_n=G(\{X_1,\ldots,X_n\},r,d)$.
In particular, note that $G_n$ is distributed as $\cG(n,r,d)$, but the different graphs in the sequence are not independent of each other.

Given a probability space $(\Omega, \cF, \mathbb{P})$ and events $E_h\in \cF$ for all $h\ge 1$, we say that $(E_h)_{h\ge 1}$ holds \emph{asymptotically almost surely}, or \emph{a.a.s.}\ for short, if $\mathbb{P}(E_h)\to 1$ as $h\to \infty$.
For real numbers $a,b,c$ with $c>0$, we write $a=b\pm c$ to mean that $a\in[b-c,b+c]$.

Our main result can now be stated as follows:

\begin{theorem}\label{thm:tree}
Fix a positive integer\/ $d$ and a real number\/ $\epsilon\in(0,1)$.
For each positive integer\/ $h$, let\/ $2\leq M = M(h) = o(h/{\log h})$, set\/ $r^*=r^*(h)\coloneqq \sqrt{d}/{2h}$, and define two events\/ $\cE_h$ and\/ $\cF_h$ as follows.
Consider the random geometric graph sequence\/ $\widehat{\cG}(r,d)=(G_n)_{n\geq1}$.
Let\/ $\cE_h$ denote the event that each balanced\/ $M$-tree\/ $T$ of height\/ $h$ appears as a subgraph of\/ $G_{|T|}$.
Let\/ $\cF_h$ denote the event that, for each balanced\/ $M$-tree\/ $T$ of height\/ $h$, the graph\/ $G_{|T|}$ does not contain\/ $T$ as a subgraph.
Then, the following two statements hold.
\begin{enumerate}[label=$(\mathrm{\roman*})$]
    \item\label{thm:item1} If\/ $r\leq(1-\epsilon)r^*$, then\/ $\cF_h$ holds a.a.s.
    \item\label{thm:item2} If\/ $r\geq(1+\epsilon)r^*$, then\/ $\cE_h$ holds a.a.s.
\end{enumerate}
\end{theorem}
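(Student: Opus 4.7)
I would prove the two parts of the theorem separately.

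\textbf{Part \ref{thm:item1} (lower bound).} Every balanced $M$-tree $T$ of height $h$ has graph diameter exactly $2h$, realised by any two leaves lying below different children of the root. Hence, if $T$ embedded into $G_{|T|}$, every two sample points would be joined by a path of at most $2h$ edges of Euclidean length at most $r\leq(1-\epsilon)\sqrt d/(2h)$, so all pairs of $X_j$ would lie at Euclidean distance at most $(1-\epsilon)\sqrt d$. To contradict this I would fix two small cubes $Q_1,Q_2\subseteq[0,1]^d$ of constant side length placed near antipodal corners of $[0,1]^d$ so that every point of $Q_1$ lies at Euclidean distance strictly greater than $(1-\epsilon)\sqrt d$ from every point of $Q_2$, and apply \cref{lem:Chernoff} (using $|T|\geq 2^h\to\infty$) to show that each of $Q_1,Q_2$ contains some $X_j$ a.a.s.

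\textbf{Part \ref{thm:item2} (upper bound).} Fix $\delta\in(0,\epsilon)$ small, set $h^\star\coloneqq\lfloor(1-\delta)h\rfloor$, and construct a copy of $T$ in two stages. In \emph{stage one} I would embed the vertices of the first $h^\star+1$ levels of $T$ via a fractal-like geometric construction that recursively assigns to each vertex $v$ at level $i\leq h^\star$ an axis-aligned box $R_v\subseteq[0,1]^d$ of volume exactly $|T_v|/|T|$, where $T_v$ denotes the subtree rooted at $v$. The boxes at each level are designed to refine the partition at the previous level, to stay approximately isotropic (e.g.\ by cutting successive boxes along their longest side), and to have, for every non-root $v$, a thin ``slab'' within $R_v$ adjacent to (or close to) the image of $\operatorname{parent}(v)$. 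I would then place each $v$ at a sample point $X_{\sigma(v)}$ lying in this slab, which guarantees that all parent--child Euclidean distances accumulate only slab-thicknesses plus a geometric error absorbed by the slack $r-r^*\geq\epsilon r^*$. The existence of $X_j$'s in each slab follows from a Chernoff bound (\cref{lem:Chernoff}) and a union bound over the $O(|T|)$ slabs, with $M=o(h/\log h)$ ensuring that $\log|T|=O(h\log M)$ is dominated by the expected slab count $\Omega(2^{\delta h})$ at the tightest level~$h^\star$.

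In \emph{stage two} I would embed the remaining $h-h^\star\approx\delta h$ layers by iterating \cref{cor:hall}. At each level $i$ with $h^\star<i\leq h$, let $A$ be the set of images chosen for vertices at level $i-1$ and $B$ the set of unused sample points lying within Euclidean distance $r$ of some vertex of $A$; on the bipartite graph with edges $\{uv:\|u-v\|\leq r\}$ I would apply \cref{cor:hall} with $k=s_i$. Verifying Hall's condition for every $S\subseteq A$ reduces, via the subtree-region partition from stage one, to showing that inside each ball of radius $(h-h^\star)r$ around a level-$h^\star$ image there remain enough unused sample points to cover the unembedded descendants in that subtree; this follows from another application of \cref{lem:Chernoff} under $M=o(h/\log h)$. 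Iterating for $i=h^\star+1,\ldots,h$ produces a full copy of $T$, and by \cref{rem:polynomial} the construction runs in polynomial time. The most delicate step is stage one, where the recursive subdivision must simultaneously keep the boxes approximately isotropic (so that the geometric error absorbed by the $\epsilon r^*$ slack remains small at every level) and keep each slab wide enough for its expected sample count to dominate $\log|T|$; the hypothesis $M=o(h/\log h)$ is exactly the threshold at which both can be achieved, and relaxing it would make the deepest slabs too sparse for the union bound to succeed.
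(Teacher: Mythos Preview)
Your Part~\ref{thm:item1} is correct and essentially identical to the paper's argument: both use the diameter-$2h$ obstruction together with a pair of sample points near antipodal corners of $[0,1]^d$.

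Your Part~\ref{thm:item2}, however, has a genuine gap in Stage one. The recursive box scheme with ``slabs near the parent'' cannot keep parent--child Euclidean distances below $r$. Take $d=1$ and $s_i=2$ for all $i$. You place the root near $\tfrac12$, and its two children in slabs of $[0,\tfrac12]$ and $[\tfrac12,1]$ adjacent to $\tfrac12$, so both children sit near $\tfrac12$. Now the left child, located near $\tfrac12$ inside $R=[0,\tfrac12]$, must place \emph{its} children in slabs of $[0,\tfrac14]$ and $[\tfrac14,\tfrac12]$ that are adjacent to its own position; but every point of $[0,\tfrac14]$ is at distance at least $\tfrac14$ from the left child, while $r=\Theta(1/h)$. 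So already at level $2$ no admissible slab exists. The phenomenon is general: the boxes $\{R_v: v\text{ at level }i\}$ partition $[0,1]^d$ and hence spread over the whole cube at \emph{every} level, whereas after $i$ edges any embedded vertex lies within $ir=\Theta(i/h)$ of the root. Requiring the level-$i$ vertex to lie inside $R_v$ is therefore infeasible for most $v$ whenever $i=o(h)$. The phrase ``distances accumulate only slab-thicknesses'' conflates two different quantities: what must be bounded is each individual parent--child distance, not their sum.

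The paper circumvents exactly this obstruction by decoupling the location where a level-$i$ vertex is embedded from the region its descendants must eventually occupy. It first parks the top $m'=O(M\log h)=o(h)$ layers inside a single tiny cube near the centre. It then spends roughly $(1-\eps/4)h$ further layers on a \emph{migration} subroutine: to each current small cube $p$ it associates a precomputed path of small cubes $(p_1,\ldots,p_t)$, with consecutive cubes at distance at most $(1+7\eps/8)r^*$, leading to the target region; at each successive layer the children are simply dropped into the next cube along this path. Only after these $\Theta(h)$ migration steps are the active vertices evenly spread among the cells of a fine tessellation, and the remaining $\Theta(\eps h)$ layers are then handled by a \emph{single} application of \cref{cor:hall} (not an iterated one). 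The missing idea in your sketch is this migration: many tree levels are needed purely to transport vertices from the centre to distant cells, and one cannot pin level-$i$ vertices to their eventual regions before that transport is complete.
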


Note that Theorem~\ref{thm:tree} is stronger than determining the sharp threshold of one single balanced $T$-tree: indeed, our events $\cE_h$ and $\cF_h$ consider all balanced $M$-trees of height $h$ simultaneously.

In the remainder of this section we prove \cref{thm:tree}.
The proof of \ref{thm:item1} is fairly simple, and most of the work will be devoted to proving \ref{thm:item2}.
For this, we are going to provide an algorithm which, under a mild assumption on the distribution of the points finds a copy of any given balanced $M$-tree $T$ of height $h$ in $G_{|T|}$ in polynomial time.
We will later show that a.a.s.\ all sufficiently large graphs appearing in the random geometric graph sequence satisfy this point-distribution property.
Before describing the embedding algorithm (see \cref{thm:deterministic} below), we introduce some notation and definitions.

Fix $d\in \mathbb N$ and $\eps\in (0,1)$.
Given $h\in\mathbb{N}$, let $r^*$ be defined as in \cref{thm:tree}.
Here and below, we refer to balanced $M$-trees of height $h$ simply as $M$-trees.
All cubes $q\subseteq \mathbb R^d$ considered throughout will be closed and axis-parallel (that is, they can all be obtained by some homothety from $[0,1]^d$).
For a set $A\subseteq \mathbb R^d$, we denote by $\partial A$ the topological boundary of $A$.
Given a set $\mathcal{W}$ of subsets of $\mathbb{R}^d$, we denote $\partial\mathcal{W}\coloneqq\bigcup_{A\in\mathcal{W}}\partial A$.
For any $x\in\mathbb{R}^d$ and $r>0$, we write $B_x(r)$ to denote the closed ball of radius $r$ and center $x$, that is, the set of all $y\in\mathbb{R}^d$ such that $\lVert x-y\rVert\le r$.

The embedding algorithm will be based on a certain tessellation of $[0,1]^d$.
Let $r \coloneqq (1+\eps) r^*$.
Given integers $M\geq2$ and $s\in \{2,\ldots,M\}$, let $k_s$ be the smallest integer which simultaneously satisfies that
\begin{equation}\label{equa:treebound1}
    \sqrt{d} s^{1-k_s}\le r
\end{equation}
and
\begin{equation}\label{equa:treebound2}
    \sqrt{d} s^{-k_s} < \frac{\eps r^*}{8}.
\end{equation}
Note that $k_s = \Theta({\log h}/{\log s}) = O(\log h)$ and that the sequence $(k_s)_{s=2}^M$ is non-increasing.
Then, let $\cW_s$ be the tessellation of $[0,1]^d$ into $s^{k_s d}$ congruent closed axis-parallel cubes.
We remark that \eqref{equa:treebound1} ensures that, for every point $x\in \mathbb R^d$, any cube of side length $s^{1-k_s}$ that contains $x$ is itself contained in $B_x(r)$.
Moreover,~\eqref{equa:treebound2} is imposed to ensure that we can nicely approximate a ball of radius $r^*$ using cubes in $\cW_s$: more precisely, for the ball of radius $(1+{3\eps}/{4})r^*$ centered at some point $x$, the cubes in $\cW_s$ intersecting $\partial B_x((1+{3\eps}/{4})r^*)$ are entirely contained in $B_x(r)\setminus B_x((1+{\eps}/{2})r^*)$; see Figure~\ref{fig 2}.
Suppose now that $M$ and $h$ are such that $m' \coloneqq d k_2 M\leq h$.
Consider a sequence $(s_i)_{i=1}^h$ with $s_i\in\{2,\ldots,M\}$ for every $i\in \{1,\ldots,h\}$, and the balanced tree $T$ over this sequence.
By the pigeonhole principle, there is an integer in $\{2,\ldots,M\}$ appearing at least $d k_2$ times among $(s_i)_{i=1}^{m'}$. 
Let $s(T)$ denote one such integer.\COMMENT{there may be multiple choices for $s(T)$; in such a case, we simply choose $s(T)$ arbitrarily among the possible values.}

Our tree embedding algorithm is encoded into (the proof of) the following theorem:

\begin{theorem}\label{thm:deterministic}
For each positive integer\/ $h$, let\/ $2\leq M=M(h)=o(h/\log h)$.
For every positive integer\/ $d$ and\/ $\epsilon\in(0,1)$, there exists some\/ $h_0\in\mathbb{N}$ such that the following holds for all\/ $h\geq h_0$:

Let\/ $r^*\coloneqq\sqrt{d}/2h$.
Let\/ $(s_i)_{i=1}^h$ be a sequence where\/ $s_i\in\{2,\ldots,M\}$, let\/ $T$ be the balanced tree over the sequence\/ $(s_i)_{i=1}^h$, and let\/ $s\coloneqq s(T)$.
Let\/ $\mathcal{X}\subseteq[0,1]^d\setminus\partial\mathcal{W}_s$ be a set of\/ $|T|$ points such that each\/ $q\in\mathcal{W}_{s}$ contains\/ $s^{-k_s d}|T|\pm|T|^{2/3}$ points.
Then,\/ $G(\cX,(1+\epsilon)r^*, d)$ contains a copy of\/ $T$.
\end{theorem}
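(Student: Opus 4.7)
The plan is to explicitly construct an embedding of $T$ into $G(\cX, r, d)$ (with $r = (1+\eps)r^*$) in two phases, using the tessellation $\cW_s$ with $s = s(T)$ as the geometric scaffold. Two properties of $\cW_s$ will be exploited throughout: by~\eqref{equa:treebound1}, any axis-aligned cuboid of side length at most $s^{1-k_s}$ has diameter at most $r$, so any two of its points are joined by an edge of $G(\cX, r, d)$; and by~\eqref{equa:treebound2}, for every $x \in [0,1]^d$, the ball $B_x((1+3\eps/4)r^*)$ is contained in a union of cubes of $\cW_s$ that is itself contained in $B_x(r)$ (the boundary cubes lie in the shell $B_x(r)\sm B_x((1+\eps/2)r^*)$).

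\textbf{Phase 1 (fractal embedding of the first $\sim (1-\eps)h$ layers).} I would assign each vertex $v$ at depth $i \le (1-\eps)h$ to a point $f(v) \in \cX$ and a nested axis-aligned cuboid $C(v) \ni f(v)$ (the ``cell'' of $v$), with $C(v) \sub C(\mathrm{parent}(v))$, in such a way that every cell has diameter at most $r$ (so tree edges inside this phase are automatically short). The cell refinement is driven by the sequence $(s_i)$: at each layer $i \le m' = dk_2 M$ with $s_i = s$ for which refinement is not yet complete, I split every active cell along one of the $d$ coordinate axes into $s$ congruent sub-slabs, cycling through the axes, and distribute the $s$ children of each parent one per sub-slab. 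By the pigeonhole choice of $s(T)$ there are at least $dk_2 \ge dk_s$ such splits among the first $m'$ layers, so after $dk_s$ of them the active cells coincide with the cubes of $\cW_s$. At the remaining layers (either $s_i \ne s$ or refinement already complete), the $s_i$ children are placed in the parent's cell without further subdivision; this is legitimate because cells always have diameter at most $\sqrt d\, s^{1-k_s} \le r$. In parallel, I would pick the concrete points of $\cX$ so as to balance the load across cubes of $\cW_s$, using the $\pm|T|^{2/3}$-balance of $\cX$ from the hypothesis to ensure that, at the end of Phase~1, each cube $q \in \cW_s$ has approximately the right remaining capacity for the descendants still destined for it.

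\textbf{Phase 2 (Hall matching for the last $\sim \eps h$ layers).} For each depth $i = (1-\eps)h+1, \ldots, h$ in turn, I would invoke \cref{cor:hall} with $k = s_i$ on the bipartite graph between the already-embedded depth-$(i-1)$ vertices $A_{i-1}$ and a carefully chosen set $B_{i-1}$ of unused points of $\cX$ with $|B_{i-1}| = s_i|A_{i-1}|$, where $uv$ is an edge whenever $\|f(u) - v\| \le r$. The resulting partition of $A_{i-1} \cup B_{i-1}$ into $|A_{i-1}|$ vertex-disjoint $s_i$-stars centered in $A_{i-1}$ gives the embedding of layer $i$. The central analytic step is the verification of Hall's condition: for every $S \sub A_{i-1}$, the number of unused points within distance $r$ of $f(S)$ must be at least $s_i|S|$. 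I would prove this by a volume estimate using the geometric property of $\cW_s$ recalled above: the union $\bigcup_{u \in S} B_{f(u)}((1+3\eps/4)r^*)$ is contained in some union $\cQ(S)$ of entire cubes of $\cW_s$, itself contained in $\bigcup_{u\in S} B_{f(u)}(r)$; by hypothesis $\cQ(S)$ contains $|\cQ(S)|\,(s^{-k_s d}|T| \pm |T|^{2/3})$ points, and after subtracting those already consumed (controlled by the load-balance maintained in Phase~1 and the previous iterations of Phase~2), the bound $s_i|S|$ is met.

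\textbf{Main obstacle.} The most delicate part is orchestrating Phase~1 so that Hall's condition survives at every step of Phase~2. When $|S|$ is small the union of balls meets only a few cubes, and the $|T|^{2/3}$ error in cube population could match the supply; this must be controlled by using the geometric clustering of the parents $f(S)$ inherited from the fractal structure, so that their balls overlap significantly and $|\cQ(S)|$ remains macroscopic relative to $|S|$. When $|S|$ is large the challenge is to count the covered cubes and unused points precisely enough, taking into account the points consumed by earlier phases; this is where the precise value $r^* = \sqrt d/(2h)$ becomes critical, since any smaller radius would leave insufficient room around each parent to host all $s_i$ children.
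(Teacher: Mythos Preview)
Your Phase~1 contains a genuine geometric gap. You require the cells to be nested, $C(v)\subseteq C(\mathrm{parent}(v))$, and simultaneously that \emph{every} cell has diameter at most $r$. These two conditions force the entire Phase~1 embedding to live inside the root cell, a set of diameter at most $r=(1+\eps)\sqrt d/(2h)$; yet you also claim that after $dk_s$ axis-splits the active cells coincide with the cubes of $\cW_s$, which tile \emph{all} of $[0,1]^d$. These are incompatible. If instead you start with $C(\text{root})=[0,1]^d$ and subdivide, then at the early layers the cells have macroscopic diameter and a parent--child pair can sit at Euclidean distance $\Theta(1)\gg r$, so the corresponding edge is absent from $G(\cX,r,d)$. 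What you are missing is that the unit cube has diameter $\sqrt d$ while each tree edge has length at most $r\approx\sqrt d/(2h)$, so reaching every part of $[0,1]^d$ from a single root necessarily consumes $\Theta(h)$ layers of \emph{transport}, not just $m'=O(M\log h)=o(h)$ layers of subdivision. The paper handles this by an explicit migration scheme: it plants the first $m'$ layers in the central cluster $\sigma_k([0,1]^d)$, and then, over $k-1$ blocks totalling at most $(1-\eps/4)h$ further layers, walks the active vertices outward along sequences of cubes $p_1,\ldots,p_t$ in $\cW_s$ with $\lVert c(p_{i+1})-c(p_i)\rVert\le(1+7\eps/8)r^*$, so that each successive layer is within distance $r$ of the previous one and, at the end, every cube of $\cW_s$ hosts exactly the same number of active vertices.

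Your Phase~2 also diverges from the paper in a way that creates work you do not resolve. Iterating \cref{cor:hall} layer by layer requires you to re-verify Hall's condition at every step, and since the matching at step $i$ gives no control on \emph{where} the children land, parents can cluster and locally exhaust the supply of unused points before step $i+1$; your ``main obstacle'' paragraph identifies this but offers no mechanism to prevent it. The paper avoids the issue entirely: once the active vertices are perfectly equidistributed over $\cW_s$, it applies Hall's condition \emph{once}, to the bipartite graph between the cubes $q\in\cW_s$ and the unseen points, with adjacency given by the enlarged cubes $\hat q$ (which have diameter at most $r$ and hence span cliques). The resulting partition into equal-size stars gives each active vertex a private clique of exactly the right size, and the remaining $\Theta(\eps h)$ layers of $T$ are then embedded greedily inside these cliques with no further Hall argument.
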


\begin{remark}
    The boundary condition $\mathcal{X}\subseteq[0,1]^d\setminus\partial\mathcal{W}_s$ is a minor technical restriction clearly satisfied by many point processes such as the Poisson point process and only needed as an artifact of the proof.
\end{remark}

\begin{proof}[Proof of \cref{thm:deterministic}]
First, we note that, by adjusting the value of $h_0$, the parameter $s$ in the statement is well defined.
Indeed, since $k_2=O(\log h)$ and $M=o(h/\log h)$, it follows that $m'=o(h)$ and, in particular, $s_{m'}$ is defined for all sufficiently large $h$.
Specifically, we assume that $h_0$ is sufficiently large so that for all $h\geq h_0$ we have
\begin{equation}\label{equa:treebound4}
    m'\leq \epsilon h/20.
\end{equation}
Moreover, letting $k\coloneqq k_s$, we also pick $h_0$ sufficiently large so that the following hold for all $h\geq h_0$:
\begin{align}
    k&\leq \epsilon h/12,\label{equa:treebound7}\\
    2^{-\epsilon h/5}&\leq s^{-kd}/2,\label{equa:treebound5}\\
    |T|^{2/3}&<s^{-2kd}|T|/2.\label{equa:treebound6}
\end{align}

Let $r \coloneqq (1+\eps) r^*$, and consider the tessellation $\mathcal{W}_s$ described before the statement of \cref{thm:deterministic}.
For each $\ell\in \{0,\ldots,k\}$, let $\cS_{\ell}$ be the tessellation of $[0,1]^d$ into $s^{\ell d}$ congruent closed axis-parallel cubes obtained by combining the cubes of $\cW_s$ into groups of size $s^{(k-\ell) d}$.
In particular, $\cS_0 = \{[0,1]^d\}$ and $\cS_k = \cW_s$.
For a cube $q\subseteq[0,1]^d$, we denote its center by $c(q)$.
Moreover, for every $i,j\in \{0,\ldots,k\}$ with $i < j$ and any cube $q\in \cS_i$, let $\sigma_j(q)$ denote the set of $s^d$ subcubes of $q$ in $\cS_j$ that form the axis-parallel cube of side length $s^{1-j}$ and center $c(q)$.
In particular, $\sigma_{i+1}(q)$ is a tessellation of $q$ into $s^d$ subcubes in $\cS_{i+1}$, and for each $j\in\{i+2,\ldots,k\}$, the set $\sigma_j(q)$ is obtained from $\sigma_{j-1}(q)$ by homothety with center $c(q)$ and ratio $1/s$.
For example, Figure~\ref{fig setup}~(a) illustrates $\cS_1$, $\cS_2$ and $\cS_3$ when $s = 3$ and $d=2$, and Figure~\ref{fig setup}~(b) depicts the set of cubes $\sigma_2(q)$ for $q = [0,1]^2$.
Note that we sometimes abuse notation and identify $\sigma_j(q)$ with its geometric realization; in particular, we identify $\bigcup_{p\in \sigma_j(q)} p$ with $\sigma_j(q)$ itself.

\begin{figure}
\centering
\resizebox{0.6\textwidth}{!}{
\begin{tikzpicture}[scale=1,line cap=round,line join=round,x=1cm,y=1cm]

\foreach \i in {0,9,18,27}{
\draw [line width=15pt] (0,\i)-- (27,\i);
\draw [line width=15pt] (\i,0)-- (\i,27);
}
\foreach \i in {3,6,12,15,21,24}{
\draw [line width=7pt,red] (0,\i)-- (27,\i);
\draw [line width=7pt,red] (\i,0)-- (\i,27);
}
\foreach \i in {1,2,4,5,7,8,10,11,13,14,16,17,19,20,22,23,25,26}{
\draw [line width=2pt] (0,\i)-- (27,\i);
\draw [line width=2pt] (\i,0)-- (\i,27);
}
\draw (13.5,-3) node[font={\fontsize{60pt}{12}\selectfont}] {(a)};
\end{tikzpicture}\hspace{11cm}%
\begin{tikzpicture}[line join=round]
\draw [line width=15pt] (0,0) -- (0,27) -- (27,27) -- (27,0) -- (0,0) -- (0,27);
\draw[anchor=north west] (17,3.5) node[font={\fontsize{60pt}{12}\selectfont}] {$q=[0,1]^2$};
\draw (13.5,-3) node[font={\fontsize{60pt}{12}\selectfont}] {(b)};
\draw[anchor=north west] (8.7,8) node[font={\fontsize{60pt}{12}\selectfont}] {$\sigma_2(q)\subseteq\mathcal{S}_2$};
\foreach \i in {9,12,15,18}{
\draw [line width=7pt,red] (9,\i) -- (18,\i);
\draw [line width=7pt,red] (\i,9) -- (\i,18);
}
\draw [line width=7pt,red] (9,9) -- (18,9) -- (18,18) -- (9,18) -- (9,9) -- (18,9);
\end{tikzpicture}
}
\caption{Figure (a) shows a tessellation of $[0,1]^2$ into $3^{2\cdot 3}$ subcubes.
Figure (b) shows an example of $\sigma_2$ applied to $q=[0,1]^2$.}
\label{fig setup}
\end{figure}
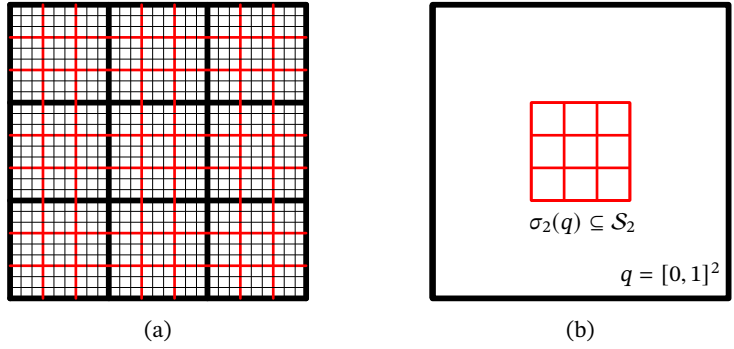

Let $G \coloneqq G(\mathcal{X},r, d)$.
Let us give a general description of the algorithm that we propose for finding a copy of $T$ in $G$.
Let $v_0$ be the root of $T$, and consider the partition of the vertices of $T$ into layers $V_0,V_1,\ldots,V_h$.
We embed the layers of $T$ into $G$ one at a time, starting from the root.
Our algorithm has three main parts which we call \emph{subroutines}.
The first subroutine lasts for $m'+1$ steps.
It embeds the layers $V_0, \ldots, V_{m'-1}$ of $T$ into an arbitrary cube $q_0\in \sigma_k([0,1]^d)$, and then all vertices in $V_{m'}$ are evenly distributed among the $s^d$ cubes in $\sigma_k([0,1]^d)$. 
The second subroutine is used to distribute the vertices in the next roughly $(1-\eps)h$ layers as evenly as possible in $[0,1]^d$; more precisely, for some $m\leq(1-\eps/4)h$ (see \cref{lem:nb of steps} below), we manage to ensure that every cube $q\in\cS_k$ contains exactly the same number of vertices of $V_{m'+m}$.
Once this point is reached, in the third subroutine, the remaining layers are embedded into $G$ by using Corollary~\ref{cor:hall}.
In particular, we associate a vertex $v\in V_{m'+m}$ to each of the vertices of $G$ that are still not covered by $T$ in such a way that $v$ and the vertices associated with $v$ form a clique whose size is the same for every $v\in V_{m'+m}$.
Note that this is sufficient to guarantee that the embedding can be extended.

Throughout, we refer to the embedding of one of the layers of $T$ as a \emph{step} of the algorithm (for simplicity, we assume that $V_0$ is embedded at the $0$-th step).
For each $i\in\{0,\ldots,m'+m\}$, at the end of the $i$-th step, we call a vertex of $G$ into which a vertex on layer $V_i$ has been embedded \emph{active}.
Moreover, we refer to the vertices of $G$ into which no vertex of $T$ has been embedded as \emph{unseen}.
For simplicity of notation, once a vertex of $T$ has been embedded into $G$, we often interchangeably use the same notation to refer to either of the two vertices.
We note here that, since $m \leq (1-\eps/4)h$, it follows that the total number of vertices embedded during the first two subroutines is\COMMENT{Intuition for the first inequality: we have to subtract the sizes of the last few layers, and these are always very large. In particular, each layer is at least twice as big as the previous. This means that each layer contains at least as many vertices as all the previous layers together. Thus, by counting from the end, when removing the last layer we reduce the size to at most a half, then at most a quarter, and so on.}
\begin{equation}\label{equa:nbvertices}
    \sum_{i=0}^{m'+m} \prod_{j=1}^i s_j \le 2^{-(h-m-m')} |T|\leq 2^{-\epsilon h/5}|T|\leq\frac{|T|}{2s^{kd}}.
\end{equation}
(Here, the first inequality follows from the fact that $s_i\ge 2$ for all $i\in\{1,\ldots,h\}$, the second inequality follows by combining the upper bound on $m$ given by \cref{lem:nb of steps} below with \eqref{equa:treebound4}, and the third follows by \eqref{equa:treebound5}.)
Therefore, by \eqref{equa:treebound6} and the condition on the point distribution in the statement, there are sufficiently many vertices in each $q\in\mathcal{S}_k$ at our disposal throughout the first two subroutines, which guarantees that the choices we make below can indeed be carried out. 

The procedure that we follow to embed the layers of $T$ into the cubes of $\cS_k$ throughout the first subroutine is very simple.
We fix an arbitrary cube $q_0\in\sigma_k([0,1]^d)$.
Then, for each $i\in\{0,\ldots,m'-1\}$, we arbitrarily embed all vertices of $V_i$ into $q_0$.
Finally, the vertices of $V_{m'}$ are split (arbitrarily) into $s^d$ sets of the same size, and each of these sets is embedded entirely into a different cube $q\in\sigma_k([0,1]^d)$.
Note that this is possible since $s^d$ divides $\prod_{i=1}^{m'} s_i$ by the definition of $s$, and since the diameter of $\sigma_k([0,1]^d)$ is at most $r$ by~\eqref{equa:treebound1}.

Let us now describe the algorithm that we use to embed the layers of $T$ into the cubes of $\cS_k$ throughout the second subroutine.
The $m$ steps of the second subroutine are grouped into $k-1$ different blocks.
For each $\ell\in \{1,\ldots,k-1\}$, we proceed iteratively as follows:
Suppose that, at the beginning of the $\ell$-th block, the algorithm has reached a configuration in which every cube $q\in \cS_{\ell-1}$ contains the same number of active vertices, and that these are equally distributed among all subcubes in $\sigma_k(q)$ (note that this is verified in the case $\ell=1$).
Then, for each $q\in \cS_{\ell-1}$, we proceed to distributing the descendants of the currently active vertices in a way that we embed them at non-decreasing distances from $\sigma_k(q)$ as described in the sequel.

\vspace{1em}
\textbf{Iteration:} 
Define $\phi_{\ell}\colon\sigma_k(q)\to \sigma_{\ell}(q)$ as the bijection obtained by homothety with center $c(q)$ and ratio $s^{k-\ell}$.
Note that $\phi_{\ell}$ depends on the cube $q$, which is fixed in our argument.
To each cube $p\in \sigma_k(q)$ we associate a sequence of (not necessarily distinct) cubes $(p_1, \ldots, p_t)$ in $\cS_k$, for some appropriately chosen $t$ which does not depend on $p$, which satisfies that
\begin{enumerate}[label=$(\mathrm{P}\arabic*)$]
    \item\label{prop1} $p_1 = p$;
    \item\label{prop2} $p_t\in \sigma_k(\phi_{\ell}(p))$, and
    \item\label{prop3} for all $i\in\{1,\ldots,t-1\}$ we have $\lVert c(p_{i+1}) - c(p_i)\rVert\le \left(1+7\eps/8\right) r^*$.
\end{enumerate}
Note that \ref{prop3} together with the triangle inequality and \eqref{equa:treebound2} ensures that, for every $i\in \{1,\ldots,t-1\}$ and every choice of points $x\in p_i$ and $y\in p_{i+1}$, we have
\begin{equation}\label{equa:treebound3}
    \lVert x-y\rVert\leq\lVert x-c(p_i)\rVert+\lVert c(p_i)-c(p_{i+1})\rVert+\lVert c(p_{i+1}) - y\rVert\leq\frac{\eps}{16} r^* + \left(1+\frac{7\eps}{8}\right) r^* + \frac{\eps}{16} r^* = r.
\end{equation}
Hence, if at some point there is an active vertex $v$ in $p_i$, it is possible to embed all its children in $p_{i+1}$.

Let us prove that the sequences of cubes described above can indeed be constructed.
We will show the construction when starting from a cube $p\in \sigma_k(q)$ whose center is at a maximum distance from $c(q)$ among all cubes in $\sigma_k(q)$ (note that, in this case, $p$ must contain a corner of the larger cube $\sigma_k(q)$).
Observe that this choice of $p$ also maximizes the distance between $c(p)$ and $c(\phi_{\ell}(p))$.
For every other $p'\in \sigma_k(q)$, the construction is the same except that consecutive cubes are chosen at a smaller distance from each other (or even coincide in some cases).

Fix $p \in\sigma_k(q)$ as above.
We construct the sequence of cubes $(p_1, \ldots, p_t)$ in $\cS_k$ satisfying the desired properties \ref{prop1}--\ref{prop3}.
We begin by setting $p_1\coloneqq p$.
For each $i\geq1$, while $\lVert c(p_i) - c(\phi_{\ell}(p))\rVert > (1+7\eps/8)r^*$, we choose a cube $p_{i+1} \in\mathcal{S}_k$ such that
\begin{equation}\label{eq:step1}
    \lVert c(p_{i+1}) - c(\phi_{\ell}(p))\rVert\leq\lVert c(p_i) - c(\phi_{\ell}(p))\rVert - \left(1+\frac{5\eps}{8}\right) r^*
\end{equation}
and
\begin{equation}\label{eq:step2}
    \lVert c(p_{i+1}) - c(p_i)\rVert\le \left(1+\frac{7\eps}{8}\right) r^*.
\end{equation}
Finally, when we reach a point where $\lVert c(p_i) - c(\phi_{\ell}(p))\rVert \le (1+7\eps/8)r^*$, we set $t\coloneqq i+1$ and choose $p_t$ as a cube in $\sigma_k(\phi_{\ell}(p))$ at smallest distance to $c(p_i)$, ties being broken arbitrarily.\COMMENT{Note actually there can be no ties, so the definition is unique.}
Note that~\eqref{equa:treebound3} holds for $i=t-1$ and this choice of $p_t$.
It remains to prove that a choice as prescribed by \eqref{eq:step1} and \eqref{eq:step2} can indeed be carried out.

\begin{figure}
\centering
\resizebox{0.9\textwidth}{!}{
\begin{tikzpicture}[scale=0.7,line cap=round,line join=round,x=1cm,y=1cm]
\clip(-0.5,-3.727681011037504) rectangle (23.60020225817874,9.753493408727518);
\draw [line width=0.8pt] (0,-1) circle (6cm);
\draw [line width=0.8pt] (0,-1) circle (8cm);
\draw [line width=0.8pt] (0,-1) circle (10cm);
\draw [line width=0.8pt] (0,-1)-- (22,8);
\draw [line width=0.8pt] (0,-1) circle (9cm);
\draw [line width=0.8pt] (0,-1)-- (6,-1);
\draw [line width=0.8pt] (8,2.6)-- (8,2);
\draw [line width=0.8pt] (8,2)-- (8.6,2);
\draw [line width=0.8pt] (8.6,2)-- (8.6,2.6);
\draw [line width=0.8pt] (8.6,2.6)-- (8,2.6);
\begin{scriptsize}
\draw [fill=black] (0,-1) circle (1pt);
\draw[color=black] (-0.02789889731862069,-1.3) node {$c(p_i)$};
\draw[color=black] (8.2,2.17) node {$w$};
\draw[color=black] (22.2,7.6) node {$c(\phi_{\ell}(p))$};
\draw [fill=black] (22,8) circle (1pt);
\draw[color=black] (3.054027340354948,-1.3) node {$r^*$};
\draw [fill=black] (8.329922605851092,2.407695611484537) circle (1pt);
\end{scriptsize}
\end{tikzpicture}
}
\caption{A one-step transition from $p_i$ to $p_{i+1}$ in two dimensions.
The four circles in the figure are all centered at $c(p_i)$ and have radii $r^*$, $(1+{\eps}/{2})r^*$, $(1+{3\eps}/{4})r^*$, and $r$.
Note that $p_{i+1}$ is the square containing $w$; its center as well as the boundary of the square $p_i$ are not presented for reasons of clarity of the figure.}
\label{fig 2}
\end{figure}
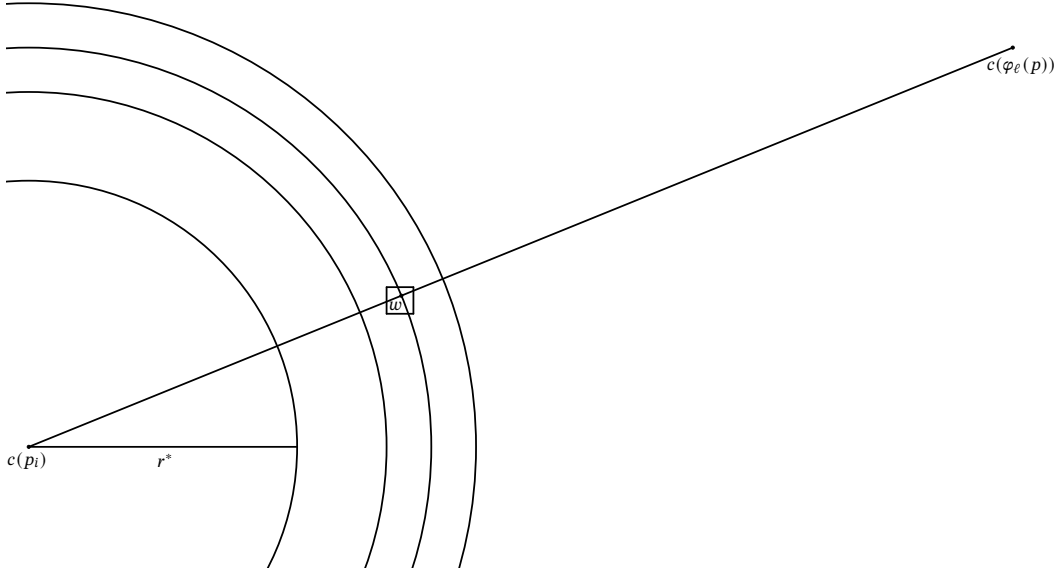

Assume we have already defined $p_i$ so that it satisfies $\lVert c(p_i) - c(\phi_{\ell}(p))\rVert > (1+7\eps/8)r^*$.
Then, choose $p_{i+1} \in\mathcal{S}_k$ to be a cube containing the point $w$ on the segment $c(p_i)c(\phi_{\ell}(p))$ at distance exactly $(1+{3\eps}/{4})r^*$ from $c(p_i)$ (if more than one such cube exists, we choose one arbitrarily); see Figure~\ref{fig 2}.
Then, the triangle inequality and \eqref{equa:treebound2} imply that
\begin{align*}
\lVert c(p_{i+1}) - c(\phi_{\ell}(p))\rVert
&\leq\lVert w-c(\phi_{\ell}(p))\rVert + \lVert c(p_{i+1})-w\rVert\\
&=\lVert c(p_i)-c(\phi_{\ell}(p))\rVert - \lVert c(p_i)-w\rVert + \lVert c(p_{i+1})-w\rVert\\
&\leq\lVert c(p_i)-c(\phi_{\ell}(p))\rVert - \left(1+\frac{3\eps}{4}\right)r^* + \frac{\eps}{16} r^* \\
&\leq\lVert c(p_i) - c(\phi_{\ell}(p))\rVert - \left(1+\frac{5\eps}{8}\right) r^*
\end{align*}
and
\begin{align*}
\lVert c(p_i) - c(p_{i+1})\rVert\leq\lVert c(p_i) - w\rVert+\lVert w-c(p_{i+1})\rVert\leq\left(1+\frac{3\eps}{4}\right)r^* + \frac{\eps}{16} r^* \le \left(1+\frac{7\eps}{8}\right) r^*,
\end{align*}
so \eqref{eq:step1} and \eqref{eq:step2} are verified.

\begin{figure}
\centering
\resizebox{0.65\textwidth}{!}{
\begin{tikzpicture}[scale=1.55,line cap=round,line join=round,x=1cm,y=1cm]
\draw [line width=0.4pt] (-10,6)-- (-10,-2);

\draw [line width=2.5pt] (-6,6)-- (-6,2);
\draw [line width=2.5pt] (-6,2)-- (-2,2);
\draw [line width=2.5pt] (-2,2)-- (-2,6);
\draw [line width=2.5pt] (-2,6)-- (-6,6);

\draw [line width=2.5pt] (-6,2.5)--(-5.5,2.5);
\draw [line width=2.5pt] (-5.5,2)--(-5.5,2.5);

\draw [line width=0.4pt] (-10,-2)-- (-2,-2);
\draw [line width=0.4pt] (-2,-2)-- (-2,6);
\draw [line width=0.4pt] (-2,6)-- (-10,6);
\draw [line width=0.4pt] (-10,2)-- (-2,2);
\draw [line width=0.4pt] (-6,6)-- (-6,-2);
\draw [line width=0.4pt] (-4,6)-- (-4,-2);
\draw [line width=0.4pt] (-8,6)-- (-8,-2);
\draw [line width=0.4pt] (-10,0)-- (-2,0);
\draw [line width=0.4pt] (-10,-1)-- (-2,-1);
\draw [line width=0.4pt] (-10,4)-- (-2,4);
\draw [line width=0.4pt] (-10,3)-- (-2,3);
\draw [line width=0.4pt] (-10,5)-- (-2,5);
\draw [line width=0.4pt] (-9,6)-- (-9,-2);
\draw [line width=0.4pt] (-10,1)-- (-2,1);
\draw [line width=0.4pt] (-7,6)-- (-7,-2);
\draw [line width=0.4pt] (-5,6)-- (-5,-2);
\draw [line width=0.4pt] (-3,6)-- (-3,-2);
\draw [line width=0.4pt] (-9.5,6)-- (-9.5,-2);
\draw [line width=0.4pt] (-8.5,6)-- (-8.5,-2);
\draw [line width=0.4pt] (-7.5,6)-- (-7.5,-2);
\draw [line width=0.4pt] (-6.5,6)-- (-6.5,-2);
\draw [line width=0.4pt] (-5.5,6)-- (-5.5,-2);
\draw [line width=0.4pt] (-4.5,6)-- (-4.5,-2);
\draw [line width=0.4pt] (-3.5,6)-- (-3.5,-2);
\draw [line width=0.4pt] (-2.5,6)-- (-2.5,-2);
\draw [line width=0.4pt] (-10,-1.5)-- (-2,-1.5);
\draw [line width=0.4pt] (-10,-0.5)-- (-2,-0.5);
\draw [line width=0.4pt] (-10,0.5)-- (-2,0.5);
\draw [line width=0.4pt] (-10,1.5)-- (-2,1.5);
\draw [line width=0.4pt] (-10,2.5)-- (-2,2.5);
\draw [line width=0.4pt] (-10,3.5)-- (-2,3.5);
\draw [line width=0.4pt] (-10,4.5)-- (-2,4.5);
\draw [line width=0.4pt] (-10,5.5)-- (-2,5.5);

\draw [color=red, line width=2.5pt] (-6-4,6-4)-- (-6-4,2-4);
\draw [color=red, line width=2.5pt] (-6-4,2-4)-- (-2-4,2-4);
\draw [color=red, line width=2.5pt] (-2-4,2-4)-- (-2-4,6-4);
\draw [color=red, line width=2.5pt] (-2-4,6-4)-- (-6-4,6-4);

\draw [color=red, line width=2.5pt] (-6,1.5)--(-6.5,1.5);
\draw [color=red, line width=2.5pt] (-6.5,1.5)--(-6.5,2);

\draw (-6.4+0.5,2.5-0.1) node[anchor=north west] {0};
\draw (-6.4+0.5,2-0.1) node[anchor=north west] {0};
\draw (-6.4,2-0.1) node[anchor=north west] {0};
\draw (-6.4,2.5-0.1) node[anchor=north west] {0};
\draw (-5.54+0.05,3-0.08) node[anchor=north west] {1,6};
\draw (-7.04+0.05,3-0.08) node[anchor=north west] {1,6};
\draw (-7.04+0.05,1.5-0.08) node[anchor=north west] {1,6};
\draw (-5.54+0.05,1.5-0.08) node[anchor=north west] {1,6};
\draw (-5.14+0.11,3.5-0.11) node [anchor=north west] {\tiny{$2,5,6$}};
\draw (-7.64+0.11,3.5-0.11) node[anchor=north west] {\tiny{$2,5,6$}};
\draw (-7.64+0.11,1-0.11) node[anchor=north west] {\tiny{$2,5,6$}};
\draw (-5.14+0.11,1-0.11) node[anchor=north west] {\tiny{$2,5,6$}};
\draw (-4.54+0.03,4-0.07) node[anchor=north west] {3,4};
\draw (-8.04+0.03,4-0.07) node[anchor=north west] {3,4};
\draw (-8.04+0.03,0.5-0.07) node[anchor=north west] {3,4};
\draw (-4.54+0.03,0.5-0.07) node[anchor=north west] {3,4};
\draw (-4+0.1,4.5-0.085) node[anchor=north west] {4};
\draw (-4.5+0.1,4.5-0.085) node[anchor=north west] {4};
\draw (-4+0.1,4-0.085) node[anchor=north west] {4};
\draw (-8.5+0.1,4.5-0.085) node[anchor=north west] {4};
\draw (-8+0.1,4.5-0.085) node[anchor=north west] {4};
\draw (-8.5+0.1,4-0.085) node[anchor=north west] {4};
\draw (-4+0.1,0.5-0.085) node[anchor=north west] {4};
\draw (-4.5+0.1,0-0.085) node[anchor=north west] {4};
\draw (-4+0.1,0-0.085) node[anchor=north west] {4};
\draw (-8.5+0.1,0-0.085) node[anchor=north west] {4};
\draw (-8+0.1,0-0.085) node[anchor=north west] {4};
\draw (-8.5+0.1,0.5-0.085) node[anchor=north west] {4};
\draw (-3.5,-0.5-0.08) node[anchor=north west] {5,6};
\draw (-5,-0.5-0.08) node[anchor=north west] {5,6};
\draw (-3.5,1-0.08) node[anchor=north west] {5,6};
\draw (-7.5,-0.5-0.08) node[anchor=north west] {5,6};
\draw (-9,-0.5-0.08) node[anchor=north west] {5,6};
\draw (-9,1-0.08) node[anchor=north west] {5,6};
\draw (-9,3.5-0.08) node[anchor=north west] {5,6};
\draw (-9,5-0.08) node[anchor=north west] {5,6};
\draw (-7.5,5-0.08) node[anchor=north west] {5,6};
\draw (-5,5-0.08) node[anchor=north west] {5,6};
\draw (-3.5,5-0.08) node[anchor=north west] {5,6};
\draw (-3.5,3.5-0.08) node[anchor=north west] {5,6};
\draw (-3.5+0.1,5.5-0.08) node[anchor=north west] {6};
\draw (-3+0.1,5-0.08) node[anchor=north west] {6};
\draw (-3+0.12,5.5-0.08) node[anchor=north west] {6};
\draw (-5.5+0.12,5.5-0.08) node[anchor=north west] {6};
\draw (-5.5+0.1,5-0.08) node[anchor=north west] {6};
\draw (-3+0.1,3.5-0.08) node[anchor=north west] {6};
\draw (-3.5+0.1,3-0.08) node[anchor=north west] {6};
\draw (-3+0.12,3-0.08) node[anchor=north west] {6};
\draw (-7.5+0.1,5.5-0.08) node[anchor=north west] {6};
\draw (-7+0.12,5.5-0.08) node[anchor=north west] {6};
\draw (-7+0.1,5-0.08) node[anchor=north west] {6};
\draw (-9+0.1,5.5-0.08) node[anchor=north west] {6};
\draw (-9.5+0.12,5.5-0.08) node[anchor=north west] {6};
\draw (-9.5+0.1,5-0.08) node[anchor=north west] {6};
\draw (-9+0.1,1.5-0.08) node[anchor=north west] {6};
\draw (-9.5+0.12,1.5-0.08) node[anchor=north west] {6};
\draw (-9.5+0.1,1-0.08) node[anchor=north west] {6};
\draw (-9.5+0.1,-0.5-0.08) node[anchor=north west] {6};
\draw (-9.5+0.12,-1-0.08) node[anchor=north west] {6};
\draw (-9+0.1,-1-0.08) node[anchor=north west] {6};
\draw (-9+0.1,3-0.08) node[anchor=north west] {6};
\draw (-9.5+0.12,3-0.08) node[anchor=north west] {6};
\draw (-9.5+0.1,3.5-0.08) node[anchor=north west] {6};
\draw (-7.5+0.1,-1-0.08) node[anchor=north west] {6};
\draw (-7+0.1,-0.5-0.08) node[anchor=north west] {6};
\draw (-7+0.12,-1-0.08) node[anchor=north west] {6};
\draw (-5.5+0.1,-0.5-0.08) node[anchor=north west] {6};
\draw (-5.5+0.12,-1-0.08) node[anchor=north west] {6};
\draw (-5+0.1,-1-0.08) node[anchor=north west] {6};
\draw (-3.5+0.1,-1-0.08) node[anchor=north west] {6};
\draw (-3+0.12,-1-0.08) node[anchor=north west] {6};
\draw (-3+0.1,-0.5-0.08) node[anchor=north west] {6};
\draw (-3+0.1,1-0.08) node[anchor=north west] {6};
\draw (-3+0.12,1.5-0.08) node[anchor=north west] {6};
\draw (-3.5+0.1,1.5-0.08) node[anchor=north west] {6};
\draw (-5+0.1,1.5-0.08) node[anchor=north west] {6};
\draw (-5.5+0.1,1-0.08) node[anchor=north west] {6};
\draw (-7+0.1,1-0.08) node[anchor=north west] {6};
\draw (-7.5+0.1,1.5-0.08) node[anchor=north west] {6};
\draw (-7.5+0.1,3-0.08) node[anchor=north west] {6};
\draw (-7+0.1,3.5-0.08) node[anchor=north west] {6};
\draw (-5.5+0.1,3.5-0.08) node[anchor=north west] {6};
\draw (-5+0.1,3-0.08) node[anchor=north west] {6};
\draw (-5+0.1,5.5-0.08) node[anchor=north west] {6};
\end{tikzpicture}
}
\caption{A graphical representation of the principle we use to distribute the vertices of $T$ for $(s,k)=(2,4)$ and in two dimensions. 
The figure shows the steps in the range $\{m',\ldots, m'+6\}$ (note that squares containing active vertices after step $m'+i$ contain only the index $i$ for reasons of space).
Note that, for $\ell = 1$, the big square with thickened black boundary is the image of the small square $p$ with thickened black boundary by $\phi_1$, and the same holds for the corresponding red squares.
Moreover, $c([0,1]^2)$, $c(p)$ and $c(\phi_1(p))$ are collinear and
$\lVert c(p) - c(\phi_1(p))\rVert\leq\lVert c([0,1]^2) - c(\phi_1(p))\rVert = \sqrt{2}(1-s^{-1})/2$.}
\label{fig 1}
\end{figure}

Now, once the sequence $(p_1, \ldots, p_t)$ is constructed, we can describe the distribution of the vertices of the tree in each step of the algorithm.
Recall that, at the beginning of the $\ell$-th block, $p_1$ contains some active vertices.
Then, for the next $t-1$ steps, we simply embed the children of all currently active vertices in $p_i$ into $p_{i+1}$ arbitrarily (which is possible thanks to~\eqref{equa:nbvertices} and~\eqref{equa:treebound3}). 
Lastly, in one more step of the algorithm, we distribute the children of all currently active vertices in $p_t$ equally among the $s^d$ subcubes of $\sigma_k(\phi_{\ell}(p))$; note that this is possible since $s^{kd}$ divides $\prod_{i=1}^{m'}s_i$ (by the definition of $s$ and since $k\leq k_2$), which itself divides the number of vertices in each of the layers $V_i$ for $i\geq m'$.
A schematic representation of the first steps of this process is shown in \Cref{fig 1}.
Finally, if $\ell=k-1$, we terminate the subroutine, and otherwise we increment the value of $\ell$ by $1$ and proceed to the next block of the second subroutine.

\vspace{1em}

Following the description of the algorithm, we must prove that it reaches a desired configuration in a suitable number of steps.
We begin by proving a bound on the number of steps of the process we have described, without regard to whether it can actually be carried out.

\begin{claim}\label{lem:nb of steps}
The second subroutine runs for at most\/ $(1-{\eps}/4)h$ steps.
\end{claim}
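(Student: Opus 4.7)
The plan is to bound the length $t_\ell$ of each of the $k-1$ blocks of the second subroutine separately and then sum. By the symmetry of the construction, within block $\ell$ the longest sequence $(p_1,\ldots,p_{t_\ell})$ is attained when $p$ is a corner subcube of $\sigma_k(q)$, and this worst-case length is the same for every $q\in\cS_{\ell-1}$; hence it suffices to bound $t_\ell$ in that configuration.

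First I would compute the initial distance $D_\ell\coloneqq\lVert c(p_1)-c(\phi_\ell(p_1))\rVert$ in the worst case. Since $\phi_\ell$ is a homothety with center $c(q)$ and ratio $s^{k-\ell}$, we have $c(\phi_\ell(p))-c(p)=(s^{k-\ell}-1)(c(p)-c(q))$, and for a corner subcube $\lVert c(p)-c(q)\rVert=\tfrac{\sqrt{d}}{2}s^{-k}(s-1)$. This yields
\[D_\ell=\tfrac{\sqrt{d}(s-1)(s^{-\ell}-s^{-k})}{2}\le\tfrac{\sqrt{d}(s-1)}{2s^\ell}.\]

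Next, by \eqref{eq:step1}, each iteration strictly decreases the distance from $c(p_i)$ to $c(\phi_\ell(p))$ by at least $(1+5\epsilon/8)r^*$; the iteration terminates once this distance becomes at most $(1+7\epsilon/8)r^*$, after which one further step takes us to a cube $p_{t_\ell}\in\sigma_k(\phi_\ell(p))$. Accounting for the initial cube $p_1$, the final step, and the ceiling in the iteration count, this gives $t_\ell\le 3+\tfrac{D_\ell}{(1+5\epsilon/8)r^*}$. Substituting $r^*=\sqrt{d}/(2h)$ and summing the geometric series $\sum_{\ell\ge 1}(s-1)/s^\ell=1$ then yields
\[\sum_{\ell=1}^{k-1}t_\ell\le 3k+\frac{h}{1+5\epsilon/8}.\]

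To conclude, a short calculation shows that $(1-\epsilon/4)-\tfrac{1}{1+5\epsilon/8}=\tfrac{3\epsilon/8-5\epsilon^2/32}{1+5\epsilon/8}\ge\tfrac{7\epsilon}{52}$ for every $\epsilon\in(0,1)$, so the desired bound $\sum_\ell t_\ell\le(1-\epsilon/4)h$ follows once $3k\le 7\epsilon h/52$, which holds for $h$ sufficiently large since $k=O(\log h)$. The whole argument is in essence careful bookkeeping; the main points requiring attention are correctly identifying the worst-case geometric configuration determining $D_\ell$, and keeping track of the two boundary steps (the starting cube $p_1$ and the final distribution step into $\sigma_k(\phi_\ell(p))$) in the count.
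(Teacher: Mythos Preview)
Your proof is correct and follows essentially the same route as the paper: bound the displacement $D_\ell$ needed in block $\ell$, divide by the guaranteed per-step progress $(1+5\eps/8)r^*$ from \eqref{eq:step1}, sum the resulting geometric series, and absorb the additive $O(k)$ term using $k=O(\log h)$. The only differences are cosmetic: the paper bounds $D_\ell$ via $\lVert c(p)-c(\phi_\ell(p))\rVert\le\lVert c(q)-c(\phi_\ell(p))\rVert$ rather than your direct homothety computation (both give $\sqrt{d}(s-1)/(2s^\ell)$), it records a ``$+1$'' per block where you more carefully write ``$+3$'', and it finishes with the cruder estimate $1/(1+5\eps/8)\le 1-\eps/3$ together with the prearranged bound \eqref{equa:treebound7} instead of your explicit inequality $(1-\eps/4)-1/(1+5\eps/8)\ge 7\eps/52$.
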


\begin{claimproof}
For every $\ell\in \{1,\ldots,k-1\}$, we have that 
\begin{equation}\label{eq:diagonal}
\lVert c(p) - c(\phi_{\ell}(p))\rVert\leq\lVert c(q) - c(\phi_{\ell}(p))\rVert\leq\frac{\sqrt{d}(s^{1-\ell} - s^{-\ell})}{2}, 
\end{equation}
where the first inequality holds since $c(p)$ belongs to the segment $c(q)c(\phi_{\ell}(p))$ (recall that $\phi_{\ell}(p)$ is obtained from $p$ by homothety with center $c(q)$ and ratio $s^{k-\ell} > 1$), and the second inequality comes from the fact that the farthest cubes from $c(q)$ in $\sigma_{\ell}(q)$ are the ones containing a corner of $q$.
Hence, by \eqref{eq:step1}, the total number of steps performed by the second subroutine is at most
\[\sum_{\ell=1}^{k-1} \left(\frac{\sqrt{d} (s^{1-\ell} - s^{-\ell})/2}{(1+5\eps/8) r^*}+1\right) \leq k + \frac{\sqrt{d}/2}{(1+5\eps/8)r^*}\leq k+\left(1-\frac{\eps}{3}\right)h\leq\left(1-\frac{\eps}{4}\right)h,\]
where in the second inequality we used that $\eps < 1$, and in the last we applied \eqref{equa:treebound7}.
\end{claimproof}

Now, recall that $m'+m$ denotes the index of the last step carried out by the second subroutine. 
As mentioned before describing the precise algorithm, \eqref{equa:treebound6}, \eqref{equa:nbvertices} and the condition on the point distribution in the statement guarantee that the process must succeed.
That is, we have an embedding of the layers $V_0,\ldots,V_{m'+m}$ of $T$ into $G$ and, moreover, each $q\in\cS_k$ contains the same number of vertices of $V_{m'+m}$.

Next, we make sure that the remaining layers can also be embedded into $G$.
For each cube $q\in \cS_k$, let us denote by $\hat q$ the \emph{enlarged copy of $q$} obtained from $q$ by intersecting the image of $q$ under homothety with center $c(q)$ and ratio $3$ with $[0,1]^d$ (that is, $\hat q$ is the union of all the cubes in $\cS_k$ that share at least one corner with $q$).
We show that all remaining layers of the tree can be embedded so that the following property holds: for every vertex $v \in V(G)$ that is active at the end of the second subroutine, if $v$ lies in a cube $q\in\cS_k$, then all descendants of $v$ lie in $\hat{q}$.
Observe that, by the definition of $\hat q$ and \eqref{equa:treebound2}, for every $q\in \cS_k$ we have that
\[\mathrm{diam}(\hat q) \le 3\sqrt{d} s^{-k}\le \eps r^*\le r,\]
so the vertices which lie in the enlarged copy of each cube in $\cS_k$ span a clique in $G$.
Therefore, it suffices to prove that all unseen vertices can be partitioned into $|V_{m'+m}|$ sets of equal sizes in such a way that each of the resulting sets is contained in a single $\hat{q}$ and can be assigned to one of the vertices in $V_{m'+m}$.

We achieve the above by considering an auxiliary bipartite graph $\Gamma$ with vertex partition $(\cS_k,\widehat V)$, where $\widehat V$ is the set of unseen vertices at the end of step $m'+m$.
We connect $q\in\cS_k$ and $v\in \widehat V$ by an edge whenever $v\in\hat{q}$.
Note that $|\widehat V|=\sum_{i=m'+m+1}^h \prod_{j=1}^i s_j$, so in particular $|\cS_k| = s^{kd}$ divides $|\widehat V|$. 
For a positive integer $a$, recall the definition of an $a$-star with center $v$, or with center in a set $V$, from \cref{sec:prelims}.

\begin{claim}\label{lem:last generations}
Let\/ $a\coloneqq|\widehat V|/|\cS_k|$.
Then, the vertices of\/ $\Gamma$ may be partitioned into\/ $s^{kd}$ vertex-disjoint\/ $a$-stars with centers in\/ $\cS_k$.
\end{claim}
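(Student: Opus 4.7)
The plan is to reduce the claim to a Hall-type condition via Corollary~\ref{cor:hall} and then verify this condition by a short counting argument. Note first that $a \cdot s^{kd} = |\widehat V|$ by definition and that $a$ is a positive integer (because $s^{kd}$ divides each $\prod_{j=1}^{i} s_j$ for $i \ge m'$: this follows from the defining property of $s$ together with the fact that $k \le k_2$). Hence it suffices to show that, for every $S \subseteq \cS_k$, the number of neighbours of $S$ in $\widehat V$ is at least $a|S|$, after which Corollary~\ref{cor:hall} delivers the desired partition into $s^{kd}$ vertex-disjoint $a$-stars.

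The cases $S = \emptyset$ and $S = \cS_k$ are immediate; for the latter, $\widehat V \subseteq [0,1]^d = \bigcup_{q \in \cS_k} \hat q$ gives that $S$ has exactly $|\widehat V| = a s^{kd}$ neighbours. For $\emptyset \subsetneq S \subsetneq \cS_k$, I would use the hypothesis $\cX \subseteq [0,1]^d \setminus \partial \cW_s$ to conclude that every point of $\cX$ lies in the interior of a unique cube of $\cS_k$. Setting
\[ S' \coloneqq \{\, p \in \cS_k : p \subseteq \textstyle\bigcup_{q \in S} \hat q \,\} \supseteq S, \]
the neighbourhood of $S$ in $\widehat V$ is then exactly $\widehat V \cap \bigcup_{p \in S'} p$. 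The key geometric input is that the corner-adjacency graph of $\cS_k$ is connected, so the proper non-empty subset $S$ has at least one cube outside of it that is corner-adjacent to $S$ and therefore lies in $S'$, giving $|S'| \ge |S| + 1$.

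The remaining step is an arithmetic calculation. The point-distribution hypothesis yields $\sum_{p \in S'} |p \cap \cX| \ge |S'|(s^{-kd}|T| - |T|^{2/3})$, and the total number of vertices embedded in the first two subroutines satisfies $E \coloneqq |\cX \setminus \widehat V| \le |T|/(2 s^{kd})$ by~\eqref{equa:nbvertices} and~\eqref{equa:treebound5}. Combining these with $a = (|T|-E)/s^{kd}$, $|S'| - |S| \ge 1$, and $|S|,|S'| \le s^{kd}$ should give
\[ |N_\Gamma(S)| - a|S| \;\ge\; \frac{|T|}{2 s^{kd}} \;-\; s^{kd}\,|T|^{2/3}, \]
which is positive for all sufficiently large $h$ since $|T| \ge 2^h$ grows exponentially while $s^{kd}$ is polynomial in $h$ by the defining inequalities~\eqref{equa:treebound1}--\eqref{equa:treebound2}.

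The main obstacle is that the naive per-cube lower bound $|N_\Gamma(S)| \ge |S|(s^{-kd}|T| - |T|^{2/3}) - E$ is too weak, because when $|S|$ is large the accumulated approximation error $|S||T|^{2/3}$ can swallow whatever slack the global bound on $E$ provides. The corner-adjacency argument $|S'| \ge |S|+1$ supplies precisely the extra ``cube's worth'' of mass $\Theta(|T|/s^{kd})$ needed to absorb simultaneously the distributional error and the contribution of the vertices already embedded during the first two subroutines.
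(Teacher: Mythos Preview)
Your proposal is correct and follows essentially the same route as the paper: reduce to Corollary~\ref{cor:hall}, use the connectivity of the corner-adjacency graph to gain one extra cube for any proper non-empty $S\subsetneq\cS_k$, and combine the point-distribution hypothesis with the bound~\eqref{equa:nbvertices} on the number of already-embedded vertices to verify Hall's condition. The only cosmetic difference is that for the final positivity the paper appeals directly to the pre-established inequality~\eqref{equa:treebound6} (which gives exactly $s^{kd}|T|^{2/3}<|T|/(2s^{kd})$), whereas you re-derive this from the asymptotic growth rates of $|T|$ and $s^{kd}$; citing~\eqref{equa:treebound6} would be cleaner since $h_0$ was already chosen to make it hold.
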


\begin{claimproof}
Let $n \coloneqq |T|$ and note that, for each non-empty set of cubes $S\subseteq \cS_k$ with $S\neq \cS_k$, the set $\cA_S \coloneqq \bigcup_{q\in S} \hat q$ contains at least one cube in $\cS_k\setminus S$. 
Therefore, by the point-distribution property from the statement, \eqref{equa:treebound6} and \eqref{equa:nbvertices}, we have that
\COMMENT{By analyzing the last terms, we have that $|S| s^{-kd} n + s^{-kd} n - s^{kd} n^{2/3} - 2^{-(h-m-m')} n\geq |S| s^{-kd} n\geq|S|s^{-kd}|\hat V|$.
For the first inequality here, it suffices to check that $s^{-kd} n \geq s^{kd} n^{2/3} + 2^{-(h-m-m')}n$ or, equivalently, $n \geq s^{2kd} n^{2/3} + s^{kd}2^{-(h-m-m')}n$.
Since $2^{-(h-m-m')}\leq2^{-\epsilon h/4}$ by \eqref{equa:treebound4}, this inequality holds directly by \eqref{equa:treebound5} and \eqref{equa:treebound6}.} 
\begin{align*}
|\cA_S\cap \widehat V|\ge (|S|+1) (s^{-kd} n - n^{2/3}) - |V(G)\setminus \widehat V|\ge |S| s^{-kd} n + s^{-kd} n - s^{kd} n^{2/3} - s^{-kd}n/2 \geq a|S|.
\end{align*}
Thus, Corollary~\ref{cor:hall} implies that $\Gamma$ contains the desired family of disjoint $a$-stars.
\end{claimproof}

As a direct consequence of \cref{lem:last generations}, one can partition the vertices in $\widehat V$ into $s^{kd}$ disjoint sets, each spanning a clique in $G$ and contained in a different enlarged cube.
Now, each of these sets can be arbitrarily partitioned into $|V_{m'+m}|/s^{kd}$ subsets of equal sizes, and each of these smaller sets can be assigned to a distinct vertex $v$ in $V_{m'+m}$.
Then, for every $v\in V_{m'+m}$, the descendants of $v$ can be embedded greedily into the complete graph associated to it,
and the embedding of $T$ into $G$ is completed.
\end{proof}

\begin{remark}
    By retracing the proof of \cref{thm:deterministic}, and taking into account \cref{rem:polynomial}, one can readily verify that our approach results in a polynomial time algorithm to find a copy of an $M$-tree in the corresponding geometric graph.
\end{remark}

With this, we can complete the proof of \cref{thm:tree}.

\begin{proof}[Proof of \cref{thm:tree}]
Suppose that $r \le (1-\eps)r^*$.
First, fix a tree $T$ of height $h = O(\log |T|)$ (which holds for every $M$-tree).
Set $G = \cG(|T|,r,d)$.
To show that $\mathbb{P}[T\subseteq G] = o(1)$, we first claim that a.a.s.\ there are vertices $u,v\in V(G)$ at Euclidean distance $(1-o(1))\sqrt{d}$ from each other. 
Note that, since the graph sequence $(G_n)_{n\ge 1}$ is increasing with respect to inclusion, it is sufficient to show the previous statement for the smallest graph hosting an $M$-tree of height $h$, that is, $G_{2^{h+1}-1}$.

To this end, consider the cubes $c_0, c_1\subseteq [0,1]^d$ of side length $1/h$ containing the corners $\{0\}^d$ and $\{1\}^d$, respectively.
Then, we have that $\dist(c_0,c_1)=(1-o(1))\sqrt{d}$ and, moreover, $|c_0\cap V(G_{2^{h+1}-1})|$ and $|c_1\cap V(G_{2^{h+1}-1})|$ both follow a binomial distribution with parameters $2^{h+1}-1$ and $1/h^d$.
Thus, the probability that $c_0$ or $c_1$ do not contain any vertices is at most
\[2(1-1/h^d)^{2^{h+1}-1}\leq 2\nume^{-2^h/h^d}.\]
Now, condition on the event that $c_0$ and $c_1$ each contain at least one vertex and suppose that $G$ admits $T$ as a spanning tree.
Since $T$ has diameter $2h$, $u$ and $v$ must be at distance at most $2h$ in $G$.
Thus, by the triangle inequality, the Euclidean distance between them must be at most $2hr$, so we must have $2hr \ge \dist(c_0, c_1) = (1-o(1))\sqrt{d}$, which is a contradiction with our choice of $r$.
Hence, with probability at least $1-2\nume^{-2^h/h^d}$, there is no $M$-tree $T$ of height $h$ such that $T\subseteq\cG(|T|,r,d)$.
This concludes the proof of \ref{thm:item1}.


Next, suppose that $r\ge (1+\eps)r^*$.
Using that $T\subseteq G$ is an increasing property, it suffices to prove the result for $r = (1+\eps) r^*$.
In fact, it suffices to verify that a.a.s.\ the conditions of the statement of \cref{thm:deterministic} hold for every sufficiently large graph in the random geometric graph sequence.
The next claim asserts that the point-distribution property from \cref{thm:deterministic} holds a.a.s.

\begin{claim}\label{claim:tree1}
A.a.s.\ for every\/ $s\in \{2,\ldots,M\}$, every cube\/ $q\in\cW_s$ and every\/ $n\ge 2^h$,\/ $q$ contains\/ $s^{-k_s d} n \pm n^{2/3}$ vertices of\/ $\cG(n,r,d)$.
\end{claim}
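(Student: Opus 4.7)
The plan is to apply Chernoff's bound (\cref{lem:Chernoff}) cube by cube and then take a single union bound over all choices of $s$, $q$ and $n$.

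Fix $s\in\{2,\ldots,M\}$, a cube $q\in\cW_s$ and an integer $n\ge 2^h$. Since the cube $q$ has volume $s^{-k_s d}$ and the vertices of $\cG(n,r,d)$ are sampled independently and uniformly from $[0,1]^d$, the random variable $X_{s,q,n}\coloneqq |q\cap V(\cG(n,r,d))|$ is binomially distributed with mean $\mu=s^{-k_s d}n$. Apply \cref{lem:Chernoff} with $\delta\coloneqq s^{k_s d} n^{-1/3}$, so that $\delta\mu=n^{2/3}$. The condition $\delta<1$ is satisfied for all sufficiently large $h$: indeed, since $M=o(h/\log h)$ and $k_s\le k_2=O(\log h)$, we have $s^{k_s d}\le M^{dk_2}=\upe^{O((\log h)^2)}$, while $n^{1/3}\ge 2^{h/3}$. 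The bound then yields
\[
\mathbb{P}\bigl[|X_{s,q,n}-\mu|\ge n^{2/3}\bigr]\le 2\exp\!\left(-\tfrac{1}{3}\delta^2\mu\right)=2\exp\!\left(-\tfrac{1}{3}s^{k_s d}n^{1/3}\right)\le 2\exp\!\left(-\tfrac{1}{3}n^{1/3}\right).
\]

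To finish, I take a union bound. For each fixed $s$, the tessellation $\cW_s$ consists of $s^{k_s d}$ cubes, so the total number of pairs $(s,q)$ is at most
\[
\sum_{s=2}^{M} s^{k_s d}\le M\cdot M^{dk_2}=\upe^{O((\log h)^2)}.
\]
Summing the Chernoff bound over $n\ge 2^h$ gives
\[
\sum_{n\ge 2^h}2\exp\!\left(-\tfrac{1}{3}n^{1/3}\right)=\upe^{-\Omega(2^{h/3})},
\]
since the tail of the series decays essentially like its first term. Multiplying the two estimates, the probability that the conclusion of the claim fails is at most $\upe^{O((\log h)^2)}\cdot \upe^{-\Omega(2^{h/3})}=o(1)$, which is what we wanted.

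The argument is essentially routine Chernoff plus a union bound; the only point that requires a small check is that the parameter $s^{k_s d}$ (which controls both the number of cubes and the tightness $\delta$ of the concentration bound) grows only quasi-polynomially in $h$ because of the assumption $M=o(h/\log h)$, and is thus dwarfed by the doubly exponential tail contributed by $n\ge 2^h$. I therefore do not anticipate any serious obstacle.
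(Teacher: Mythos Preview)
Your proof is correct and follows essentially the same approach as the paper: a Chernoff bound for each fixed $(s,q,n)$, followed by a union bound over all choices. The paper bounds the number of cubes by $s^{k_s d}=O(h^{2d})$ rather than your cruder $M^{dk_2}=\upe^{O((\log h)^2)}$, but either estimate is swamped by the $\exp(-n^{1/3}/3)$ tail, so the difference is immaterial.
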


\begin{claimproof} 
Fix $s$ and $n$ as above and consider a fixed cube $q\in\cW_s$.
Let $X$ denote the number of vertices of $\cG(n,r,d)$ inside $q$.
Then, $X$ is a binomial random variable with parameters $n$ and $p=s^{-k_s d}$.
Therefore, $\mathbb{E}[X]=s^{-k_s d}n$ and, by \cref{lem:Chernoff},
\begin{align*}
\mathbb{P}[X\neq s^{-k_s d}n\pm n^{2/3}] 
&= \mathbb{P}\left[|X-\mathbb{E}[X]|>\frac{n^{2/3}}{s^{-k_s d}n}s^{-k_s d}n\right]\\
&\leq 2\exp\left(-\left(\frac{n^{2/3}}{s^{-k_s d}n}\right)^2\frac{s^{-k_s d}n}{3}\right)\le 2\exp\left(-\frac{n^{1/3}}{3}\right).
\end{align*}
A union bound over the $M-1 = o(h)$ possible values of $s$, the $s^{k_s d} = O(r^{-2}) = O(h^{2d})$ cubes in $\cW_s$ and all $n\ge 2^h$ shows that the event of the lemma holds with probability at least
\[1 - O\left(\sum_{n = 2^h}^{\infty} 2h^{2d+1} \exp\left(-\frac{n^{1/3}}{3}\right)\right) = 1 - o(1),\] 
as desired.
\end{claimproof}

Since with probability $1$ none of the vertices of any of the graphs in the random geometric graph sequence is on the boundary of any cube $q\in\mathcal{W}_{s}$, the proof of \cref{thm:tree} is completed.
\end{proof}

\section{Extensions and concluding remarks}

\subsection{Other metric spaces}

As a first remark, we note that all our results can be extended to other $\ell_p$ norms where $1\leq p\leq\infty$.
Indeed, the notion of random geometric graph can be adapted to each norm simply by replacing the Euclidean distance in the definition with the distance in the $\ell_p$ norm.
Then, by retracing the proof of our main theorem, the sharp threshold becomes $d^{1/p}/2h$.

\subsection{Threshold width}

As mentioned in the introduction, \citet{GRK05} gave upper bounds on the threshold width for any monotone increasing property in $\cG(n,r,d)$.
Let us state their result.
For any positive integer $n$, real number $x\in[0,1]$, and increasing property $\mathcal{P}$, let
\[
r_{\mathcal{P}}(n,x)\coloneqq\inf \{r \ge 0: \Pr[\cG(n,r,d) \in \mathcal{P}] \geq x\}.
\]
For each $\epsilon\in(0,1/2)$, define the \emph{$\epsilon$-threshold width} of property $\mathcal{P}$ as $\delta_{\mathcal{P}}(n,\epsilon)\coloneqq r_{\mathcal{P}}(n,1-\epsilon)-r_{\mathcal{P}}(n,\epsilon)$.
\citet{GRK05} showed that, for any monotone increasing property $\mathcal{P}$, for $d=1$ we have $\delta_{\mathcal{P}}(n,\epsilon)=O(\sqrt{\log \epsilon^{-1}/n})$ (in fact, this bound is also sharp), for $d=2$ we have $\delta_{\mathcal{P}}(n,\epsilon)=O(\log^{3/4}n/\sqrt{n})$, and for $d \ge 3$ we have $\delta_{\mathcal{P}}(n,\epsilon)=O(\log^{1/d} n / n^{1/d})$.
(Some references, such as the paper by Goel, Rai and Krishnamachari~\cite{GRK05}, refer to the fact that $\delta_{\mathcal{P}}(n,\epsilon)=o(1)$ as property $\mathcal{P}$ having a sharp threshold.
We note, however, that this is different from the notion of a sharp threshold we have considered throughout.)

The threshold width for the property of containing a complete balanced $s$-ary tree that can be derived from our methods is far from the upper bound of \citet{GRK05}.
In fact, by retracing our proof, it only follows that the width is $O(\log\log h/(h\log h))$.
Since the bounds given by Goel, Rai and Krishnamachari are tighter, it is natural to wonder about the exact width of the threshold window.
The following problem addresses this question: 

\begin{problem}
Determine the threshold width for the property that $\cG(n,r,d)$ contains a spanning balanced $s$-ary tree.
\end{problem}

\COMMENT{The ultimate problem would be to determine the limiting distribution of the probability that the random graph contains any given balanced tree.}

\subsection{Balanced \texorpdfstring{$s$}{s}-ary trees of all orders}\label{sect:prebalanced}

In the introduction we defined a (complete) balanced $s$-ary tree to be a tree with layers $V_0,\ldots,V_h$ where all vertices in layers $V_0,\ldots,V_{h-1}$ have exactly $s$ children.
One weakness of this definition is the fact that these trees are only defined for certain orders: indeed, any such tree must have $\sum_{i=0}^hs^i$ vertices.
One may consider a more general definition which works for all possible orders as follows.
Given any positive integers $n$ and $s$, let $h=h(n)$ be the unique positive integer such that 
\begin{equation}\label{equa:generaldef}
    \sum_{i=0}^{h-1}s^i<n\leq\sum_{i=0}^hs^i.
\end{equation}
A \emph{pre-balanced $s$-ary tree} on $n$ vertices is then a tree with layers $V_0,\ldots,V_h$ such that all vertices in layers $V_0,\ldots,V_{h-2}$ have exactly $s$ children and every vertex in layer $V_{h-1}$ has at most $s$ children.
Note that, when \eqref{equa:generaldef} is not satisfied with equality in the upper bound, there may be multiple non-isomorphic pre-balanced $s$-ary trees.

One would expect that our results extend to this more general definition of pre-balanced trees.
This, however, is not immediate.
Intuitively, the reason for this is that pre-balanced $s$-ary trees can be quite ``unbalanced'' in the following sense: For any $s$-ary tree, let us define the \emph{weight} of a vertex as the number of descendants it has. 
Then, in complete balanced $s$-ary trees, all vertices in the same layer have the same weight.
In pre-balanced $s$-ary trees, however, the weights of vertices in the same layer may differ by a factor of~$s$.
(Indeed, consider the pre-balanced $s$-ary tree obtained by taking $s$ complete balanced $s$-ary trees, with $s-1$ of height $h-1$ and one of height $h-2$, and having the roots of each of these be the children of a new root).

Our proof extends directly to pre-balanced $s$-ary trees of height $h$ where, for some integer $m'' = m''(h) = h-o(h)$, all vertices in $V_{m''}$ have the same weight.
On the other extreme, consider a pre-balanced $s$-ary tree that has vertices of arbitrarily different weights in the same layer but satisfies the following property for some integer $\widehat{m}=\widehat{m}(h)=o(h)$: for every vertex $v\in V_{\widehat{m}}$, if we consider the tree induced by $v$ together with all of its descendants, then all vertices in the same layer of this subtree have the same weight (this case contains the example of $s$ complete balanced trees with an added common root mentioned above).
In this case, we can greedily embed all vertices in layers $V_0,\ldots,V_{\widehat{m}}$ into a square in $\sigma_k([0,1]^d)$, and then apply the algorithm to each of the $|V_{\widehat{m}}|$ trees rooted at the vertices in layer $V_{\widehat{m}}$ simultaneously.
However, if the conditions described above are not met (e.g., if all vertices in the first $\epsilon h$ layers have the same weight but then weights become very different), it is unclear how to extend our proof.

Despite this, we still believe that the thresholds should be the same, and even that all pre-balanced $s$-ary trees of height $h$ on $n$ vertices should appear essentially simultaneously.
We therefore propose the following universality conjecture: 

\begin{conjecture}
    Fix positive integers\/ $s\geq2$ and\/ $d\geq1$.
    For a positive integer\/ $n$, set\/ $h$ to be the unique integer satisfying \eqref{equa:generaldef}.
    Then,\/ $r^*\coloneqq\sqrt{d}/2h$ is the sharp threshold for the event that\/ $\cG(n,r,d)$ contains a copy of every pre-balanced\/ $s$-ary tree of height\/ $h$ on\/ $n$ vertices.
\end{conjecture}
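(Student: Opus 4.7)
The plan follows the overall structure of the proof of \cref{thm:tree}, with the main new challenge concentrated in handling the irregularities that pre-balanced $s$-ary trees exhibit at their last layer. The lower bound is essentially immediate: every pre-balanced $s$-ary tree of height $h$ has diameter at most $2h$, so the diameter argument used for part $(\mathrm{i})$ of \cref{thm:tree} applies verbatim to rule out any spanning tree of diameter at most $2h$ below $r^*$, and no union bound over pre-balanced trees is needed.

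For the upper bound I would adapt the algorithm of \cref{thm:deterministic}. Given any pre-balanced $s$-ary tree $T$ of height $h$ on $n$ vertices, the subgraph of $T$ induced on layers $V_0,\ldots,V_{h-1}$ is a complete balanced $s$-ary tree of height $h-1$, and is therefore identical across all such $T$. Since $m'+m<h-1$ for $h$ sufficiently large, the first two subroutines apply essentially unchanged and distribute the $s^{m'+m}$ vertices of $V_{m'+m}$ equally across the cubes of $\cS_k$. The substantive change is in the third subroutine, since vertices in $V_{m'+m}$ now carry subtree weights $w(v)$ that need not coincide; a direct calculation shows that for every $v\in V_i$ with $i\le h-1$,
\[\frac{s^{h-i}-s}{s-1}\le w(v)\le \frac{s^{h-i+1}-s}{s-1},\]
so weights in the same layer differ by at most a factor of $s$. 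In place of \cref{cor:hall} I would invoke a weighted (capacitated) Hall theorem (equivalently, an integral max-flow argument) to partition the unseen vertex set $\widehat V$ into pieces $(S_v)_{v\in V_{m'+m}}$ with $|S_v|=w(v)$ and $S_v\subseteq\hat q_v$, where $q_v$ is the cube of $\cS_k$ containing $v$.

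The main obstacle will be verifying the resulting weighted Hall condition, which requires that for every $I\subseteq V_{m'+m}$ the number of unseen vertices in $\bigcup_{v\in I}\hat q_v$ is at least $\sum_{v\in I}w(v)$. The naive bound $\sum_{v\in I}w(v)\le |I|w_{\max}$ combined with \cref{claim:tree1} produces a deficit of roughly a factor of $s$, which the $3^d$ enlargement factor only absorbs when $s\le 3^d$. To handle the remaining regime, I would modify the second subroutine so that whenever children are split among the $s^d$ subcubes of a block, the split balances the total subtree weight per subcube rather than the vertex count. Processing children greedily in decreasing order of weight should keep the per-cube weight imbalance bounded by $O(w_{\max})$ at every step, and summing this error across the $O(\log h)$ blocks of the second subroutine should yield that every cube of $\cS_k$ ends up with total weight $(|\widehat V|/s^{kd})(1+o(1))$; at this point the weighted Hall condition follows from \cref{claim:tree1} analogously to \cref{lem:last generations}. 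The delicate parts of executing this plan are controlling the accumulated integrality and rounding errors in the weighted distribution step and ensuring that the argument goes through uniformly over all pre-balanced $s$-ary trees on $n$ vertices simultaneously.
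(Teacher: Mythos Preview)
The statement you are addressing is listed in the paper as an open \emph{conjecture}; there is no proof to compare against. In \cref{sect:prebalanced} the authors explicitly say that their method handles only two special cases---when all vertices in some layer $V_{m''}$ with $m''=h-o(h)$ have equal weight, or when the tree decomposes at some layer $V_{\widehat m}$ with $\widehat m=o(h)$ into subtrees each of which is internally weight-balanced---and that for trees where ``all vertices in the first $\epsilon h$ layers have the same weight but then weights become very different, it is unclear how to extend our proof''.

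Your lower bound is fine and is exactly the diameter argument of \cref{thm:tree}\ref{thm:item1}. Your observation that the first $h-1$ layers of any pre-balanced $s$-ary tree form the complete balanced $s$-ary tree of height $h-1$, so that the first two subroutines can be run on this common skeleton, is also correct. The substantive new idea---replacing the equal-count splits at the end of each block by weight-balanced splits and then using a capacitated Hall/flow argument---is natural and does go beyond what the paper attempts. But what you have written is a plan, not a proof, and the gap is precisely where the paper locates the difficulty. Two concrete issues: first, the analogue of \cref{lem:last generations} gives only one extra cube of slack for any $S\subsetneq\cS_k$, so for the weighted Hall condition to hold uniformly you need the per-cube weight deviation to be $o(|\widehat V|/s^{2kd})$, not merely $o(|\widehat V|/s^{kd})$; your ``$O(\log h)$ blocks times $O(w_{\max})$'' heuristic is far too coarse to deliver this, and whether the dominant error from the very first split at layer $m'$ is small enough depends sensitively on how large $m'$ is taken. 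Second, once you balance weight rather than count, the divisibility arguments that the original proof uses to guarantee exact equal splits no longer apply, so every split incurs an integrality error that must be tracked through the recursion. Resolving these points carefully, uniformly over all pre-balanced $s$-ary trees on $n$ vertices, would be a genuine contribution beyond the paper.
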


We note that the discussion in this section can also be extended to balanced trees over a sequence $(s_i)_{i\geq1}$, where the extension of the definition is analogous to the extension for $s$-ary trees.

\subsection{Bounded-degree trees}

As mentioned in the introduction, a class of trees of particular interest is the class of bounded-degree trees.
This class contains some of the trees we have considered in this paper.
For instance, let $T^s_h$ be the balanced tree of height $h$ over the sequence $(s_i)_{i=1}^h$ where $s_1=s$ and $s_i=s-1$ for all $i\in\{2,\ldots,h\}$.
The following is an immediate consequence of \cref{thm:treev2}.

\begin{corollary}
Let\/ $d$ and\/ $s\geq3$ be fixed integers.
Let\/ $h$ be an integer and\/ $n$ be the number of vertices of\/ $T_h^s$.
The sharp threshold for the appearance of\/ $T_h^s$ as a subgraph of\/ $\cG(n,r,d)$ is\COMMENT{The number of vertices $n$ of the $s$-regular tree of height $h$ is
\[n=1+s\sum_{i=0}^{h-2}(s-1)^i=1+s\frac{(s-1)^{h-1}-1}{s-2}.\]
Working to isolate $h$, we have that
\[n=1+s\frac{(s-1)^{h-1}-1}{s-2}\iff\frac{s-2}{s}(n-1)+1=(s-1)^{h-1}\iff h=\log_{s-1}\left(\frac{s-2}{s}(n-1)+1\right)+1.\]
Now, in the threshold we only care about asymptotic terms, so (assuming $s$ is constant) we have 
\[h=\log_{s-1}\left(\frac{s-2}{s}(n-1)+1\right)+1\sim\log_{s-1}\left(\frac{s-2}{s}(n-1)\right)\sim\log_{s-1}\left(\frac{s-2}{s}n\right)=\frac{\log\left(\frac{s-2}{s}n\right)}{\log(s-1)}=\frac{\log\frac{s-2}{s}}{\log(s-1)}+\frac{\log n}{\log(s-1)}\sim\frac{\log n}{\log(s-1)}.\]
Now we simply substitute this into the formula of the threshold to obtain the result.}
\[\frac{\sqrt{d}}{2h}\sim\frac{\sqrt{d}\log(s-1)}{2\log n}.\]
\end{corollary}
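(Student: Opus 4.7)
The plan is to derive this corollary directly from \cref{thm:treev2}, since the bulk of the work has already been done there, and then perform a short asymptotic computation to convert the threshold from being expressed in terms of $h$ to being expressed in terms of $n$.

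First, I would check that $T_h^s$ falls within the framework of \cref{thm:treev2}. The tree $T_h^s$ is by definition the balanced tree over the sequence $(s_i)_{i=1}^h$ with $s_1=s$ and $s_i=s-1$ for $i\in\{2,\ldots,h\}$. Since $s\geq 3$, we have $s-1\geq 2$, so every $s_i$ lies in $\{2,\ldots,s\}$. Thus, taking $M\coloneqq s$, which is a constant with respect to $h$, the assumption $M=o(h/\log h)$ is trivially satisfied for all sufficiently large $h$, and $T_h^s$ is a balanced $M$-tree of height $h$. Applying \cref{thm:treev2} therefore yields that $r^*=\sqrt{d}/2h$ is the sharp threshold for $\cG(|T_h^s|,r,d)$ to contain a copy of $T_h^s$.

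Second, I would carry out the asymptotic computation converting $h$ into $\log n$. The number of vertices of $T_h^s$ is
\[
n = 1 + s\sum_{i=0}^{h-2}(s-1)^i = 1 + s\cdot\frac{(s-1)^{h-1}-1}{s-2},
\]
so, solving for $h$,
\[
h = 1 + \log_{s-1}\!\left(\frac{(s-2)(n-1)}{s}+1\right) = \frac{\log n}{\log(s-1)} + O(1)
\]
as $n\to\infty$ with $s$ fixed. Consequently, $h\sim \log n/\log(s-1)$ and therefore
\[
r^* = \frac{\sqrt{d}}{2h} \sim \frac{\sqrt{d}\log(s-1)}{2\log n},
\]
which is precisely the stated asymptotic. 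There is no real obstacle here: the conceptual work is already packaged in \cref{thm:treev2}, and the only remaining content is the routine inversion of the geometric-sum formula for $|T_h^s|$.
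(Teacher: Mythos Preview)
Your proposal is correct and follows essentially the same approach as the paper: invoke \cref{thm:treev2} with $M=s$ constant to obtain the threshold $\sqrt{d}/2h$, then invert the geometric-sum formula for $|T_h^s|$ to get $h\sim\log n/\log(s-1)$ and substitute. The paper presents the corollary as an immediate consequence of \cref{thm:treev2} with exactly this computation relegated to a hidden comment.
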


We suspect that, for each fixed value of $s$, among all trees with degrees bounded by $s$, the tree $T_h^s$ is the hardest to embed in the sense that it has the highest threshold. 
Thus, we propose the following universality conjecture:

\begin{conjecture}
Fix positive integers\/ $d$ and\/ $s\geq3$.
For any\/ $\epsilon>0$, if 
\[r\geq(1+\epsilon)\frac{\sqrt{d}\log(s-1)}{2\log n},\] 
then a.a.s.\ $\cG(n,r,d)$ contains \emph{every} spanning tree of maximum degree at most\/ $s$.
\end{conjecture}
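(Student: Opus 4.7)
My plan is to prove the conjecture by combining the fractal embedding technique of \cref{thm:deterministic} with the Hamiltonicity results of \cite{BBKMW11, MPW11}. The key observation is that the target radius $r^*$ is substantially larger than the Hamiltonicity threshold of $\cG(n,r,d)$; in particular, at $r = (1+\epsilon)r^*$ the graph a.a.s.\ contains very high powers of a Hamilton cycle, which can readily accommodate ``path-like'' substructures. The role of the fractal algorithm should therefore be restricted to handling the ``bushy'' part of an arbitrary bounded-degree tree.

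Given any max-degree-$s$ spanning tree $T$, I would decompose it into a bushy core $C$ and a collection of path-like appendages. One candidate definition is to declare a vertex $v \in T$ to be \emph{bushy} if at least two children of $v$ have subtrees of size at least $\log^{2}n$, and to let $C$ be the subtree of $T$ spanned by the bushy vertices together with the paths connecting them. A counting argument using the max-degree constraint should yield $\mathrm{height}(C) \leq (1+o(1))\log_{s-1} n$, since the vertices at depth $i$ from the root of $C$ each carry at least $\Omega(\log^{2}n) \cdot 2^i$ descendants, limiting how deep $C$ can reach.

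The embedding would then proceed in two stages. First, I would embed $C$ using a modification of the fractal algorithm, allowing polynomial slack in the number of active vertices per cube of $\mathcal{W}_s$ at each step to accommodate the non-uniform branching in $C$. Second, each appendage $P$ would be embedded into a local neighborhood of its attachment point in $C$: since the cube containing that point locally looks like a smaller random geometric graph with a much larger \emph{relative} radius, it contains high powers of a Hamilton cycle and thus admits a copy of any tree on $|P|$ vertices with bounded degree. A Hall-type matching between attachment points and appendages (analogous to the third subroutine of \cref{thm:deterministic}) would ensure global consistency.

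The main obstacle will be proving that the bushy core $C$ has logarithmic height for \emph{every} max-degree-$s$ spanning tree simultaneously, and that the modified fractal embedding of $C$ leaves the right number of unused vertices in each geometric region to host the appendages. Trees with intricate multi-scale structure --- for instance, combinations of a long path backbone with balanced subtrees at several scales --- are likely to require a more refined, multi-scale decomposition than the single ``core plus appendages'' split described above. Making the fractal embedding robust to arbitrary tree shapes, while simultaneously reserving local capacity for the appendages and avoiding conflicts between them, appears to be the most technically demanding aspect of the proof.
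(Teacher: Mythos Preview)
The statement you are attempting to prove is a \emph{conjecture} in the paper, not a theorem: the authors explicitly leave it open and provide no proof. There is therefore nothing in the paper to compare your proposal against.

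That said, your plan has a concrete gap already at the decomposition stage. With your definition of ``bushy'' and of the core $C$ (the subtree spanned by all bushy vertices together with the paths connecting them), the claimed bound $\mathrm{height}(C)\le (1+o(1))\log_{s-1}n$ fails. Two bushy vertices can be joined in $T$ by a non-branching path of essentially arbitrary length, and that entire path then lies in $C$. For instance, take a root $v_0$ with one child leading into a balanced binary subtree of size $\log^2 n$ and another child starting a path $v_1,\ldots,v_k$ of length $k=n^{1/2}$, with a balanced binary subtree of size $n-k-\log^2 n$ hanging off $v_k$. Both $v_0$ and many vertices inside the large binary subtree are bushy, so $C$ contains the whole path $v_0,\ldots,v_k$ and has height at least $n^{1/2}$. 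Your counting argument (``vertices at depth $i$ in $C$ carry $\Omega(\log^2 n)\cdot 2^i$ descendants'') implicitly assumes $C$ branches at every level, which it need not.

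A related problem affects the second stage: appendages can have size linear in $n$ (e.g., the path-plus-binary-tree example discussed just after the conjecture in the paper), so embedding an appendage ``into a local neighborhood of its attachment point'' is not meaningful---the required neighborhood is the whole cube $[0,1]^d$, and then the appendage competes globally with the core and with every other appendage for vertices. Resolving this competition is exactly the difficulty the authors flag when they pose the conjecture; your proposal does not yet contain a mechanism that handles it.
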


As discussed in the introduction, though, we know that there exist trees of bounded degree whose thresholds are far smaller than those discussed here.
It would be interesting to understand which properties of a tree can be used to determine their threshold.
We note that, while the diameter has resulted to be a good estimator throughout this paper, this is not the correct parameter in general.
Indeed, consider a tree on $2n$ vertices defined as follows: take a balanced binary tree on $n$ vertices, and add a path on $n$ vertices appended to the root.
The resulting tree has linear diameter, but it is not hard to see that the threshold for its appearance is $\Omega(1/\log n)$. 
As a step towards the general problem of determining the threshold of each family of bounded-degree trees, we propose the following problem:

\begin{problem}
    Given any tree\/ $T$, let \/$\mathrm{diam}(T)$ denote the diameter of\/ $T$.
    Give general sufficient conditions which guarantee that, for a sequence\/ $(T_i)_{i\geq1}$ of trees, the sharp threshold for their appearance in\/ $\cG(n,r,d)$ is\/ $\sqrt{d}/\mathrm{diam}(T_i)$.
\end{problem}

\bibliographystyle{mystyle} 
\bibliography{TreeRGG}


\end{document}